\documentclass[smallextended]{svjour3}
% \jyear{2023}

\usepackage{xcolor}
\usepackage{enumerate}
\usepackage{amsmath}
\usepackage{amssymb}

\usepackage{multirow}
\usepackage{mathrsfs}
\usepackage[bmargin=1.35in,rmargin=1.7in,asymmetric]{geometry}
\usepackage{booktabs}
\usepackage{mathtools}
\usepackage{color}
\usepackage{algorithm}
\usepackage{algorithmicx}
\usepackage{url}
\usepackage{latexsym}
\usepackage{booktabs}
\usepackage{bm}
\usepackage{graphicx}
\usepackage[flushleft]{threeparttable}
\usepackage{algpseudocode}
\usepackage{textcomp}
\usepackage{subfigure}
\usepackage[toc,page]{appendix}
\usepackage{textcomp}
%\allowdisplaybreaks[4]
% \usepackage{biblatex}
%\def\baselinestretch{1.2}
%\renewcommand{\proofname}{\textbf{Proof}}

\newcommand{\nfR}{\mathbb{R}}

\newcommand{\lip}{\left<}
\newcommand{\rip}{\right>}
\newcommand{\lvt}{\left[}
\newcommand{\rvt}{\right]}
\newcommand{\idl}{\mbox{Ideal}}
\newcommand{\qmod}{\mbox{Qmod}}
\newcommand{\ddd}{,\ldots,}

\newcommand{\fall}{\quad \mbox{for all}\quad}
\newcommand{\vi}{\mbox{VI}}
\newcommand{\VIXF}{\mbox{VI}\left(X,\mbF\right)}

%\titleformat{\section}[hang]{\large\bfseries}{\thesection.}{1em}{}
%\titleformat{\subsection}{\normalsize\bfseries}{\thesubsection}{1em}{}

\DeclareMathOperator{\Rank}{rank}

\newcommand{\wt}{\widetilde}
\newcommand{\td}{\tilde}
\newcommand{\wtU}{\widetilde{\mathcal{U}}}
\newcommand{\wtV}{\widetilde{\mathcal{V}}}
\newcommand{\wtW}{\widetilde{\mathcal{W}}}
\newcommand{\re}{\mathbb{R}}
\newcommand{\cpx}{\mathbb{C}}

\newcommand{\N}{\mathbb{N}}

\newcommand{\Pj}{\mathbb{P}}

\def\af{\alpha}

\def\gm{\gamma}

\newcommand{\st}{\mathit{s.t.}}
\newcommand{\reff}[1]{(\ref{#1})}

\newcommand{\lmd}{\lambda}
\newcommand{\pt}{\partial}
\newcommand{\dt}{\delta}

\newcommand{\mbF}{F}
\newcommand{\mc}[1]{\mathcal{#1}}

\newcommand{\defeq}{\,:=\,}
\def\rank{\mbox{rank}}
\def\cod{\mbox{codim}\,}
\newcommand{\bdes}{\begin{description}}
\newcommand{\edes}{\end{description}}

\newcommand{\SOLXF}{\mbox{SOL}\left(X,\mbF\right)}

\newcommand{\bal}{\begin{align}}
\newcommand{\eal}{\end{align}}
\newcommand{\htm}{\hat{m}}
\newcommand{\tdx}{\td{x}}

\newcommand{\bnum}{\begin{enumerate}}
\newcommand{\enum}{\end{enumerate}}

\newcommand{\bit}{\begin{itemize}}
\newcommand{\eit}{\end{itemize}}

\newcommand{\bea}{\begin{eqnarray}}
\newcommand{\eea}{\end{eqnarray}}
\newcommand{\be}{\begin{equation}}
\newcommand{\ee}{\end{equation}}

\newcommand{\baray}{\begin{array}}
\newcommand{\earay}{\end{array}}

\newcommand{\bsry}{\begin{subarray}}
\newcommand{\esry}{\end{subarray}}

\newcommand{\bca}{\begin{cases}}
\newcommand{\eca}{\end{cases}}

\newcommand{\bcen}{\begin{center}}
\newcommand{\ecen}{\end{center}}

\newcommand{\bbm}{\begin{bmatrix}}
\newcommand{\ebm}{\end{bmatrix}}

\newcommand{\btab}{\begin{tabular}}
\newcommand{\etab}{\end{tabular}}

\spnewtheorem{thm}{Theorem}[section]{\bf}{\it}
\spnewtheorem{prop}[thm]{Proposition}{\bf}{\it}
\spnewtheorem{lem}[thm]{Lemma}{\bf}{\it}
\spnewtheorem{cor}[thm]{Corollary}{\bf}{\it}
\spnewtheorem{ass}[thm]{Assumption}{\bf}{\it}
\spnewtheorem{defi}[thm]{Definition}{\bf}{\it}
\spnewtheorem{alg}[thm]{Algorithm}{\bf}{\it}
\spnewtheorem{exm}[thm]{Example}{\bf}{\rm}
\spnewtheorem{rmk}[thm]{Remark}{\bf}{\rm}
\spnewtheorem{cond}[thm]{Condition}{\bf}{\it}
\spnewtheorem{quest}[thm]{Question}{\bf}{\it}

% % \renewcommand*\appendixpagename{Appendix}
% % \renewcommand*\appendixtocname{Appendix}
\setcounter{equation}{0}
\setcounter{subsection}{0}
\numberwithin{equation}{section}
\journalname{}
\begin{document}

\title{Solving polynomial variational inequality problems via Lagrange multiplier expressions and Moment-SOS relaxations}
\titlerunning{Polynomial VIPs}
% \date{}
\author{Jiawang Nie \and Defeng Sun \and Xindong Tang \and Min Zhang}
\authorrunning{J.~Nie, D.~Sun, X.~Tang and M.~Zhang}

\institute{
Jiawang~Nie \at
Department of Mathematics, University of California San Diego,
9500 Gilman Drive, La Jolla, CA, USA, 92093. \\
\email{njw@math.ucsd.edu}
\and
Defeng Sun \at
  Department of Applied Mathematics,
The Hong Kong Polytechnic University,
Hung Hom, Kowloon, Hong Kong. \\
\email{defeng.sun@polyu.edu.hk}
\and
Xindong Tang \at
Department of Applied Mathematics,
The Hong Kong Polytechnic University,
Hung Hom, Kowloon, Hong Kong. \\
\email{xindong.tang@polyu.edu.hk}
\and
Min Zhang \at
Department of Applied Mathematics,
The Hong Kong Polytechnic University,
Hung Hom, Kowloon, Hong Kong. \\
\email{min-opt.zhang@polyu.edu.hk}
}
% \email{min-opt.zhang@polyu.edu.hk}
% \author*[1]{\fnm{Jiawang} \sur{Nie}}\email{njw@math.ucsd.edu}
% % \equalcont{These authors contributed equally to this work.}

% \author[2]{\fnm{Defeng} \sur{Sun}}\email{defeng.sun@polyu.edu.hk}
% % \equalcont{These authors contributed equally to this work.}

% \author[2]{\fnm{Xindong} \sur{Tang}}\email{xindong.tang@polyu.edu.hk}
% % \equalcont{These authors contributed equally to this work.}

% \author[2]{\fnm{Min} \sur{Zhang}}\email{min-opt.zhang@polyu.edu.hk}
% % \equalcont{These authors contributed equally to this work.}

% \affil*[1]{\orgdiv{Department of Mathematics}, \orgname{University of California}, \orgaddress{\street{9500 Gilman Drive}, \city{La Jolla}, \postcode{92093}, \state{CA}, \country{USA}}}

% \affil[2]{\orgdiv{Department of Applied Mathematics}, \orgname{The Hong Kong Polytechnic University}, \orgaddress{\street{Hung Hom}, \city{Kowloon}, \country{Hong Kong}}}
\maketitle

\begin{abstract}
    In this paper, we study variational inequality problems (VIPs) with involved mappings and feasible sets characterized by polynomial functions (namely, polynomial VIPs).
    We propose a numerical algorithm for computing solutions to polynomial VIPs based on Lagrange multiplier expressions and the Moment-SOS hierarchy of semidefinite relaxations.
    We also extend our approach to finding more or even all solutions to polynomial VIPs.
    We show that the method proposed in this paper can find solutions or detect the nonexistence of solutions within finitely many steps, under some general assumptions.
    In addition, we show that if the VIP is given by generic polynomials,
    then it has finitely many Karush-Kuhn-Tucker points,
    and our method can solve it within finitely many steps.
    Numerical experiments are conducted to illustrate the efficiency of the proposed methods.
\end{abstract}

\subclass{90C23, 65K15, 90C22}

\keywords{Variational inequality, polynomial optimization, Lagrange multiplier expression, Moment-SOS hierarchy}

\section{Introduction}
Let $\re^n$ be the $n$-dimensional Euclidean space and $X\subseteq \re^n$ be a nonempty closed set.
For a tuple of functions $F(x):=(F_1(x)\ddd F_n(x))$ where every $F_i: X \rightarrow \mathbb{R}$,
the {\it variational inequality problem} (VIP) is
\be \label{eq:VI}
\mbox{Finding}\quad x\in X \quad \mbox{such that} \quad
(y-x)^T F(x)\ge0 \quad \mbox{for all} \quad y\in X. \ee
Denote the problem (\ref{eq:VI}) by $\VIXF$
and its solution set by $\SOLXF$.
We call the $\VIXF$ a \emph{polynomial VIP} if $F_1\ddd F_n$ are polynomial functions in $x$
and $X$ is characterized by
\be\label{eq:X}
X=\{x\in\re^n: g_i(x)\ge0 \, (i\in\mc{I}),\ g_i(x)=0 \, (i\in\mc{E})\},\ee
where $\mc{E}$ and $\mc{I}$ are two disjoint index sets and every $g_i\, (i\in\mc{E}\cup\mc{I})$ is a polynomial in $x$.

Variational inequalities were first proposed in the 1960s and have been in the central position of the optimization field since then.
As a uniform approach to handling optimization and equilibrium problems, variational inequalities have various applications in decision-making, economy, finance, engineering, and other fields.
We refer to the monograph \cite{Facchinei2007book} for a general introduction to VIPs.

Methods for solving VIPs have been extensively investigated in the literature (see \cite{Facchinei2007book,k2001} and references therein). Most approaches are designed for handling the $\VIXF$ under the typical assumption that the set $X$ is convex.
For such cases, it is well known that the $\VIXF$ can be equivalently reformulated into the following equation:
\begin{eqnarray}\label{nat}
F^{\textrm{nat}}(x) := x - \Pi_X(x-F(x))=0, \quad
\end{eqnarray}
where $\Pi_X(\cdot)$ means the projection onto $X$. This reformulation leads to the popular projection-type methods, such as the basic fixed-point method \cite{s1970}, the extragradient method \cite{k1977}, and the hyperplane projection method \cite{k1997}.
Usually, the pseudomonotonicity of map $F$ is required to guarantee the convergence of the projection-type methods.
% even requires $\textbf{F}$ to be Lipschitz continuous.Let $x=\Pi_X(z)$ in \reff{nat}.
Besides \eqref{nat}, another equivalent equation reformulation of the $\VIXF$ is as follows \cite{r1992}: %Besides that, with a change of variable, solving the $\VIXF$ is equivalent to handling the following equation \cite{r1992}:
\be\label{nor}
F^{\textrm{nor}}(z) := F(\Pi_X(z)) + z - \Pi_X(z)=0.
\ee
With a change of variable, if $x^*\in \textrm{SOL}(X, F)$, then $z^* := x^* - F(x^*)$ is a solution to \eqref{nor}, and conversely, if $z^*\in X$ solves \eqref{nor}, then $x^*:=\Pi_X(z^*)$ is a solution to the $\VIXF$.
Numerical approaches such as the nonsmooth Newton method \cite{p1990,r1994}, the interior point method \cite{kmny1991,y1997}, and the smoothing Newton method \cite{cqs1998,qsz2000}, were developed for solving the $\VIXF$ via dealing with the equivalent nonsmooth equation \eqref{nat} or \eqref{nor}.

However, when the set $X$ is nonconvex, the projection map $\Pi_X$ may not be single-valued (e.g., consider the projection from the origin to a ring $\{x\in \mathbb{R}^n: 1\leq \|x\|^2\leq 2\}$), which leads the nonsmooth equations \reff{nat} and \reff{nor} to become general inclusion problems.
To the best of our knowledge, no well-developed algorithm is applicable to solve general inclusion problems efficiently, especially when $F$ lacks the monotonicity-like property. Fortunately, if the set $X$ is finitely representable as a system of equalities and inequalities, then the $\VIXF$ associates with the Karush-Kuhn-Tucker (KKT) system (see details in Section~\ref{sc:pre}) under certain constraint qualification conditions (CQs).
The KKT system can be viewed as a mixed complementarity problem (MiCP), thus methods for solving MiCPs can be applied; see \cite{df1995,Facchinei2007book,nz2016}.
% However, there exists another gap for handling $\VIXF$ with nonconvex $X$ using the KKT conditions.
Every solution of the KKT system solves the $\VIXF$ when $X$ is convex.
However, this is not true under the nonconvex setting.
Indeed, when $X$ is nonconvex, solutions to the KKT system only provide candidate solutions to the $\VIXF$.
%Moreover, it is difficult to verify whether a KKT point is a solution to the $\VIXF$ when $X$ is nonconvex.
Furthermore,  when $X$ is bounded and convex, the existence of solutions to the $\VIXF$ is guaranteed. However, without boundedness or convexity of $X$,
the $\SOLXF$ may be empty,
and there is no well-known result for the existence of solutions.
In fact, when $X$ is nonconvex or unbounded,
how to detect the nonexistence of solutions is still open,
to the best of the authors' knowledge.

Recently, some studies on the theoretical properties of polynomial VIPs and their variations have been developed. Hieu \cite{Hieu2020} studied the solution set of the polynomial VIP with a convex feasible set.
Tensor complementarity problems and tensor variational inequality problems were considered in \cite{Bai2016,Wang2018}. Further, weakly homogeneous variational inequality problems, as extensions of polynomial VIPs and tensor VIPs, were studied in \cite{Gowda2019,Ma2020}.
Moreover, the Moment-SOS hierarchy of semidefinite relaxations was developed for solving polynomial optimization problems. It can find global minimizers for nonconvex polynomial optimization problems with theoretical guarantees under mild conditions.
It was also applied to solve many optimization-related problems given by polynomials,
such as bilevel optimization \cite{Jeyakumar2016,nie2017bilevel,nie2021bilevel},
saddle point problems and generalized Nash equilibrium problems (GNEPs) \cite{Nie2020nash,Nie2021convex,Nie2018saddle},
tensor (eigenvalue) complementarity problems \cite{Fan2018,Zhao2019}, etc.
We refer to \cite{lasserre2015introduction,LasICM,Lau09,LauICM,NieLoc}
for more references about polynomial optimization.

In this paper, we focus on numerical methods for solving polynomial VIPs without assuming the monotonicity of the map $\mbF$ or the convexity of the set $X$.
Our main contributions are listed below.
\begin{itemize}
    \item We propose an algorithm for solving polynomial VIPs using Lagrange multiplier expressions and the Moment-SOS relaxations. Under some general assumptions, the proposed algorithm can find a solution to the VIP or detect the nonexistence of solutions in finitely many steps. Particularly, this computational goal can be achieved in the initial loop when $X$ is convex.

    \item Based on the proposed algorithm for finding a solution to a polynomial VIP,
    we further investigate how to find more (even all, if there are finitely many) solutions.

    \item We show that if the VIP is given by generic polynomial functions,
    then there are finitely many KKT points, and all solutions to the polynomial VIP are KKT points.
    In this case, our method is guaranteed to terminate within finitely many loops.
    Moreover, we give an upper bound for the number of KKT points using the intersection theory.

    \item Even when the algorithms do not have finite convergence,
    we show the asymptotic convergence under certain continuity assumptions.
    Numerical experiments are made and presented to show the efficiency of our methods.

\end{itemize}

This paper is organized as follows. Some preliminaries about VIPs and the polynomial optimization problems are given in Section~\ref{sc:pre}.
In Section~\ref{sc:solveVI},
the algorithm for finding a solution to the polynomial VIP is developed. Based on the algorithm, we further introduce methods for finding more solutions to the polynomial VIP.
Section~\ref{sc:pop} studies the Moment-SOS hierarchy of semidefinite relaxations.
Numerical experiments are presented in Section~\ref{sc:ne}.
In the appendix, we show the finiteness of KKT points for solution sets of VIPs given by generic polynomials and study the algebraic degree of polynomial VIPs.

\section{Preliminaries}\label{sc:pre}

\noindent {\bf Notation}
The symbol $\mathbb{N}$ (resp., $\mathbb{R}$, $\mathbb{C}$) stands for the set of
nonnegative integers (resp., real numbers, complex numbers).
For a positive integer $k$, denote the set $[k] := \{1, \ldots, k\}$.
For a real number $t$, the $\lceil t \rceil$
represents the smallest integer not smaller than $t$.
We use $e_i$ to denote the vector such that the $i$th entry is
$1$ and all others are zeros.
By writing $A\succeq0$ (resp., $A\succ0$), we mean that the matrix $A$
is symmetric positive semidefinite (resp., positive definite).

Let $\nfR[x]$ denote the ring of polynomials
with real coefficients in $x\in {\mathbb R}^n$,
and $\nfR[x]_d$ denote its subset of polynomials whose degrees are not greater than $d$.
The $\cpx[x]$ and $\cpx[x]_d$ are defined similarly.
For a polynomial $p\in\re[x]$,
the $p=0$ means $p(x)$ is identically zero on $\re^n$.
We say the polynomial $p$ is nonzero if $p\ne0$.
Let $\af := (\af_1, \ldots, \af_n) \in \N^{n}$, and we denote
\[
x^{\af} := x_1^{\af_1} \cdots x_n^{\af_n}, \quad
\vert \af\vert :=\af_1+\cdots+\af_n.
\]
For an integer $d >0$, denote the monomial power set $\mathbb{N}_d^n \, := \,
\{\alpha\in {\mathbb{N}}^n: \, \ \vert\alpha\vert  \le d \}.$
We use $[x]_d$ to denote the vector of all monomials in $x$ in the graded alphabetical order whose degree is at most $d$.
For instance, if $x =(x_1, x_2)$, then
\[
[x]_3 = \lvt 1,  x_1, x_2, x_1^2, x_1x_2,
x_2^2, x_1^3, x_1^2x_2, x_1x_2^2, x_2^3\rvt ^T.
\]
Throughout the paper, a property is said to hold {\it generically}
if it holds for all points in the space of input data
except for a set of Lebesgue measure zero.

\subsection{Ideals and positive polynomials}
\label{ssc:poly}

Let $\mathbb{F} := \re\ \mbox{or}\ \cpx$. For a polynomial
$p \in\mathbb{F}[x]$ and subsets $I, J \subseteq \mathbb{F}[x]$,
we define
\[
p \cdot I  :=\{ p  q: \, q \in I \}, \quad
I+J  := \{a+b: \, a \in I, b \in J  \}.
\]
The subset $I$ is an ideal if $p \cdot I \subseteq I$ for all $p\in\mathbb{F}[x]$
and $I+I \subseteq I$.
For a tuple of polynomials $h = (h_1, \ldots, h_s)$, the set
\[
\idl[h]:= h_1\cdot\mathbb{F}[x] + \cdots + h_s \cdot \mathbb{F}[x]
\]
is the ideal generated by $h$, which is the smallest ideal
containing every $h_i$.

We review basic concepts in polynomial optimization.
A polynomial $\sigma \in \re[x]$ is said to be a sum of squares (SOS)
if $\sigma = p_1^2+\cdots+p_k^2$ for polynomials $p_i \in\nfR[x]~(i\in [k])$ .
The set of all SOS polynomials in $x$ is denoted as $\Sigma[x]$.
For a degree $d$, we denote the truncation
\[
\Sigma[x]_d \, := \, \Sigma[x] \cap \nfR[x]_d.
\]
For a tuple $g=(g_1,\ldots,g_t)$ of polynomials in $x$,
its quadratic module is the set
\[
\qmod[g] \, := \,  \Sigma[x] +  g_1 \cdot \Sigma[x] + \cdots + g_t \cdot  \Sigma[x].
\]
Similarly, we denote the truncation of $\qmod[g]$
\[
\qmod[g]_{2d} \, := \, \Sigma[x]_{2d} + g_1\cdot \Sigma[x]_{2d-\deg(g_1)}
+\cdots+g_t\cdot\Sigma[x]_{2d-\deg(g_t)},
\]
where $\deg(g_i)$ means the degree of $g_i$.
The tuple $g$ determines the basic closed semi-algebraic set
\begin{equation}
  \label{polyrep}
\mathcal{S}(g) \, := \,  \{x \in \nfR^n: g_1(x) \ge  0, \ldots, g_t(x) \ge  0  \}.
\end{equation}
For a tuple $h=(h_1,\ldots,h_s)$ of polynomials in $\re[x]$, its real zero set is
\[
\mc{Z}(h) := \{x \in\re^n : h_1(x)=\cdots=h_s(x)=0\}.
\]
The set $\idl[h]+\qmod[g]$ is said to be {\it archimedean}
if there exists $\rho \in \idl[h]+\qmod[g]$ such that the set $\mathcal{S}(\rho)$ is compact.
If $\idl[h]+\qmod[g]$ is archimedean, then
$\mathcal{Z}(h)\cap\mathcal{S}(g)$ must be compact.
Conversely, if $\mathcal{Z}(h)\cap\mathcal{S}(g)$ is compact, say,
$\mathcal{Z}(h)\cap\mathcal{S}(g)$ is contained in the ball $R -\|x\|^2 \ge 0$,
then $\idl[h]+\qmod[g,R -\|x\|^2]$ is archimedean
and $\mathcal{Z}(h)\cap\mathcal{S}(g) = \mathcal{Z}(h)\cap\mathcal{S}(g, R -\|x\|^2)$.
Clearly, if $f \in \idl[h]+\qmod[g]$, then
$f \ge 0$ on $\mathcal{Z}(h) \cap \mathcal{S}(g)$.
The reverse is not necessarily true.
However, when $\idl[h]+\qmod[g]$ is archimedean,
if $f > 0$ on $\mathcal{Z}(h)\cap\mathcal{S}(g)$, then $f \in \idl[h]+\qmod[g]$.
This conclusion is referenced as
Putinar's Positivstellensatz \cite{putinar1993positive}.
Interestingly, if $f \ge 0$ on $\mathcal{Z}(h)\cap\mathcal{S}(g)$,
we also have $f\in \idl[h]+\qmod[g]$,
under some standard optimality conditions \cite{nie2014optimality}.

\subsection{Localizing and moment matrices}
\label{ssc:locmat}

Let $\re^{ \N_{2d}^{n} }$ denote the space of all real vectors
that are labeled by $\af \in \N_{2d}^{n}$.
A vector $y \in \re^{ \N_{2d}^{n} }$ is labeled as
$y \,=\, (y_\af)_{ \af \in \N_{2d}^{n} }.$
Such $y$ is called a
{\it truncated multi-sequence} (tms) of degree $2d$.
For a polynomial $f  = \sum_{ \af \in \N^n_{2d} } f_\af x^\af \in  \re[x]_{2d}$,
define the operation
\be \label{<f,y>}
\langle f, y \rangle \, := \, {\sum}_{ \af \in \N^n_{2d} } f_\af y_\af.
\ee
The operation $\langle f, y \rangle$ is a bilinear function in $(f, y)$.
For a polynomial $q \in \re[x]$ with $\deg(q) \le 2d$,
and the integer $t = d - \lceil \deg(q)/2 \rceil$,
the outer product $q \cdot [x]_t ([x]_t)^T$
is a symmetric matrix polynomial in $x$, with length $\binom{n+t}{t}$.
We write the expansion as
\[
q \cdot [x]_t ([x]_t)^T \, = \, {\sum}_{ \af \in \N_{2d}^n }
x^\af  Q_\af,
\]
for some symmetric matrices $Q_\af$.
Then we define the matrix function
\be \label{df:Lf[y]}
L_{q}^{(d)}[y] \, := \, {\sum}_{ \af \in \N_{2d}^n } y_\af  Q_\af.
\ee
It is called the $d$th {\it localizing matrix} of $q$ generated by $y$.
For a given $q$, the matrix $L_{q}^{(d)}[y]$ is linear in $y$.
%Clearly, if $q(u) \geq 0$ and $y = [u]_{2d}$, then
%\[ L_{q}^{(d)}[y] = q(u) [u]_t[u]_t^T \succeq 0. \]
Localizing and moment matrices are important for getting semidefinite
relaxations of polynomial optimization problems
\cite{Las01,nie2013certifying,nie2013polynomial}.
We refer to \cite{lasserre2015introduction,LasICM,Lau09,LauICM,NieLoc}
for more references about polynomial optimization and moment problems.

\subsection{The KKT conditions and Lagrange multiplier expressions}\label{subs:KKT}
Let $x\in \SOLXF$ with the set $X$ given by (\ref{eq:X}).
Suppose $\mc{E}\cup\mc{I}=[m]$.
Under the linear independence constraint qualification condition
(LICQ, see \cite{Brks}),
there exists a Lagrange multiplier vector $\lambda=(\lambda_1 \ddd \lambda_m)$ such that
\be\label{eq:viKKT}
\left\{
\begin{array}{c}
\mbF(x) = \sum_{i=1}^{m} \lambda_i \nabla g_i(x),\
%\lambda_1\cdot g_1(x)=\dots= \lambda_m\cdot g_m(x)=0,\ g_i(x)=0\, (i\in\mc{E}),\\
g_i(x)=0\, (i\in\mc{E}),\\
\lambda_i\ge0,\ g_i(x)\ge0,\ \lambda_i\cdot g_i(x)=0 \, (i\in\mc{I}).
\end{array}
\right.
\ee
This is called the {\it KKT conditions} for the $\VIXF$.
A solution $(x,\lmd)$ to (\ref{eq:viKKT}) is called a {\it KKT pair}, and $x$ is called a {\it KKT point}.
Particularly,
when the set $X$ is convex, $x$ solves the $\VIXF$ if and only if it is a KKT point.
For the given $\mbox{VI}\left(X,F\right)$,
every KKT pair $(x,\lmd)$ satisfies
\be
\label{eq:Clmd=df}
\underbrace{\bbm
\nabla g_1(x) & \nabla g_2(x) &  \cdots &  \nabla g_m(x) \\
g_1(x) & 0  & \cdots & 0 \\
0  & g_2(x)  & \cdots & 0 \\
\vdots & \vdots & \ddots & \vdots \\
0  &  0  & \cdots & g_m(x)
\ebm}_{G(x) }
\underbrace{
\bbm  \lmd_1 \\ \lmd_2 \\ \vdots \\ \lmd_m \ebm
}_{\lmd}
=
\underbrace{\bbm  \mbF(x)  \\ 0 \\ \vdots \\ 0 \ebm}_{ \hat{\mbF}(x)} .
\ee
We say the polynomial tuple $g=(g_1,\ldots,g_m)$ is nonsingular if the matrix of polynomials $G(x)$ has full column rank for all $x\in\cpx^n$.
By \cite[Proposition~5.1]{Nie2019},
the tuple $g$ is nonsingular if and only if
there exists a matrix of real coefficient polynomials $L(x)$ such that
\be\label{eq:LG=I}L(x)G(x)=I_m.\ee
When constraining polynomials $g_1\ddd g_m$ are generic,
$\rank\, G(x)=m$  for all $x\in\cpx^n$ \cite[Proposition~5.7]{Nie2019},
and there exists an $L(x)$ satisfying (\ref{eq:LG=I}).
In this case, the following identity holds at all KKT pairs $(x,\lmd)$:
\be\label{eq:LME}\lambda=L(x)\cdot\hat{\mbF}(x).\ee
Denote the $i$th component of $L(x)\cdot\hat{\mbF}(x)$ by $\lmd_i(x)$,
and let $\lmd(x):=(\lmd_1(x) \ddd \lmd_m(x))$.
Then $\lmd(x)$ is a tuple of real polynomials in $x$ and $\lambda_i=\lmd_i(x)$ for all $i\in[m]$ at all KKT pairs $(x,\lambda)$.
The $\lmd(x)$ is called a {\it polynomial Lagrange multiplier expression} (LME) for the $\VIXF$.

For the polynomial $\VIXF$, suppose the LME $\lmd(x)$ exists.
Then the KKT system (\ref{eq:viKKT}) can be equivalently written as
\be\label{eq:viKKTLME}
\left\{
\begin{array}{c}
\mbF(x) = \sum_{i=1}^{m} \lambda_i(x) \nabla g_i(x),\\
g_i(x)=0\, (i\in\mc{E}),\\
\lambda_i(x)\ge0, g_i(x)\ge0, \lambda_i(x)\cdot g_i(x)=0 \, (i\in\mc{I}).
\end{array}
\right.
\ee
This is a system of polynomial equalities and inequalities in $x$.
If we denote by $\mc{K}$ the set of points satisfying (\ref{eq:viKKTLME}),
then $\SOLXF\subseteq\mc{K}$.
For the convenience of our discussion,
we denote
\[
 E \,\coloneqq\, \{\mbF(x) - \sum\nolimits_{i=1}^{m} \lmd_i(x) \nabla g_i(x)\}\cup \{\lmd_i(x)\cdot g_i(x) \, \vert \, i\in \mc{I} \} \cup \{g_i(x) \, \vert \, i\in \mc{E} \},
\]
\[
 I \,\coloneqq\, \{\lmd_i(x) \, \vert \, i\in \mc{I} \} \cup \{g_i(x) \, \vert \, i\in \mc{I} \}.
\]
Then, $\mc{K}=\mc{Z}( E )\cap\mc{S}( I )$.

\section{Finding solutions for polynomial VIPs}
\label{sc:solveVI}
In this section, we first propose an algorithm for finding a solution to the polynomial $\VIXF$ by solving a sequence of polynomial optimization problems.
Then, based on the proposed algorithm, we provide a method to find more solutions to the $\VIXF$.
%and to solve optimization problems with variational inequality constraints.
Throughout this section, we assume the $\VIXF$ is given by polynomials and the constraining tuple $g=(g_1\ddd g_m)$ is nonsingular, thus LMEs exist.

\subsection{Finding one solution for the polynomial VIP}
\label{sc:find1}
For the given polynomial $\VIXF$,
let $\Theta$ be a generic positive definite matrix. Consider the following polynomial optimization problem ($[x]_1:=(1,x_1,x_2\ddd x_n)$)
\be\label{eq:KKTopt}
\left\{\ \begin{array}{ll} \min\limits_{x\in\re^n} \quad & [x]_1^T\Theta[x]_1,\\
\st& x\in\mc{K}.
\end{array}\right.
\ee
% where the vector $[x]_1=(1,x_1,x_2\ddd x_n)$.
For the $\VIXF$ with a convex feasible set $X$,
by solving (\ref{eq:KKTopt}),
one gets a solution to the $\VIXF$ if (\ref{eq:KKTopt}) has a nonempty feasible set,
or detect the emptiness of $\SOLXF$ otherwise.
This is shown in the following proposition.
\begin{prop}
\label{prop:cvx}
Consider the polynomial $\VIXF$, where $X$ is a convex set that is given by (\ref{eq:X}) with a nonsingular polynomial tuple $g$.
If the problem (\ref{eq:KKTopt}) is infeasible,
then $\VIXF$ does not have any solution.
Otherwise, the solution set of (\ref{eq:KKTopt}) is nonempty, and every minimizer solves the $\VIXF$.
\end{prop}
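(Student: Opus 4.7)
The plan is to identify the feasible set of (\ref{eq:KKTopt}) with the set $\mc{K}$ defined in Section \ref{subs:KKT}, and then to establish the set equality $\SOLXF = \mc{K}$ under the convexity of $X$ and the nonsingularity of the constraining tuple $g$. Since the objective $[x]_1^T \Theta [x]_1$ does not interact with the feasible set beyond selecting a single minimizer, both claims of the proposition reduce to this set-theoretic comparison.

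First I would verify the inclusion $\SOLXF \subseteq \mc{K}$, which does not use convexity of $X$. Given any $x^{\ast} \in \SOLXF$, the nonsingularity of $g$ implies that the matrix $G(x^{\ast})$ has full column rank; in particular the gradients of the active constraints at $x^{\ast}$ together with the gradients of the equality constraints are linearly independent, so LICQ holds. This yields a multiplier vector $\lmd^{\ast}$ satisfying (\ref{eq:viKKT}). The LME identity (\ref{eq:LME}) then forces $\lmd^{\ast} = \lmd(x^{\ast})$, so $x^{\ast}$ satisfies the polynomial system (\ref{eq:viKKTLME}), i.e.\ $x^{\ast} \in \mc{K}$. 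This already handles the first assertion: if (\ref{eq:KKTopt}) is infeasible, then $\mc{K} = \emptyset$ and hence $\SOLXF = \emptyset$.

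The main content is the reverse inclusion $\mc{K} \subseteq \SOLXF$, which is the place where convexity of $X$ enters essentially. Fix $x \in \mc{K}$ and any $y \in X$; I would exploit that $X$ is convex, so $x + t(y - x) \in X$ for all $t \in [0,1]$. Differentiating the polynomial identity $g_i(x + t(y-x)) = 0$ at $t = 0$ for $i \in \mc{E}$ gives $\nabla g_i(x)^T (y - x) = 0$, while for each active inequality index $i \in \mc{I}$ with $g_i(x) = 0$, the inequality $g_i(x + t(y-x)) \geq 0 = g_i(x)$ for small $t > 0$ yields $\nabla g_i(x)^T (y - x) \geq 0$. For inactive indices $i \in \mc{I}$, the complementarity $\lmd_i(x) g_i(x) = 0$ forces $\lmd_i(x) = 0$, eliminating these terms from the KKT relation $F(x) = \sum_i \lmd_i(x) \nabla g_i(x)$. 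Combining these three observations with the sign condition $\lmd_i(x) \geq 0$ for $i \in \mc{I}$ gives
\[
(y - x)^T F(x) \,=\, \sum_{i \in \mc{E}} \lmd_i(x) \nabla g_i(x)^T (y - x) + \sum_{i \in \mc{I}} \lmd_i(x) \nabla g_i(x)^T (y - x) \,\geq\, 0,
\]
so $x \in \SOLXF$.

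Putting these inclusions together with the fact that $\mc{K}$ is closed, if (\ref{eq:KKTopt}) is feasible then its solution set is nonempty (the strong convexity and coercivity of $x \mapsto [x]_1^T \Theta [x]_1$ guarantee this), and every minimizer lies in $\mc{K} \subseteq \SOLXF$. The only delicate step is the reverse inclusion $\mc{K} \subseteq \SOLXF$: one has to be careful that convexity of $X$ as a set, rather than convexity/concavity of the individual defining polynomials $g_i$, is sufficient to push the directional derivatives of $g_i$ along chords of $X$ in the correct direction. The argument above does exactly this without requiring any sign or curvature assumption on the defining polynomials beyond those built into (\ref{eq:X}).
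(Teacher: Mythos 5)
Your proposal is correct and follows essentially the same route as the paper: nonsingularity of $g$ gives LICQ and the LME identity, hence $\SOLXF\subseteq\mc{K}$, while convexity of $X$ gives the reverse inclusion, and positive definiteness of $\Theta$ gives existence of a minimizer. The only difference is that you spell out the chord/directional-derivative argument for $\mc{K}\subseteq\SOLXF$, which the paper simply asserts as a known consequence of convexity; your added detail is sound.
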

\begin{proof}
By \cite[Proposition~5.1]{nie2014moment}, the nonsingularity of the polynomial tuple $g$ yields the existence of the LMEs, which also infers that the LICQ holds. Thus $\SOLXF\subseteq\mc{K}$, and the infeasibility of the problem (\ref{eq:KKTopt}) implies the emptiness of $\SOLXF$.

Suppose the problem (\ref{eq:KKTopt}) is feasible, i.e., $\mc{K}\ne\emptyset$.
Since $\Theta$ is a generic positive definite matrix,
the quadratic polynomial $[x]_1^T\Theta[x]_1$ is strongly convex,
which implies the existence of minimizers for (\ref{eq:KKTopt}).
In addition, under the nonsingularity condition of $g$ and the convexity of $X$,
we have $\SOLXF=\mc{K}$, thus the minimizer of (\ref{eq:KKTopt}) solves $\VIXF$.
\qed\end{proof}
However, when the set $X$ is nonconvex, a minimizer of (\ref{eq:KKTopt}) may not solve the $\VIXF$. Consider the gap function
\[\nu(x)\defeq \inf_{y\in X}\ (y-x)^T\mbF(x).\]
It is clear that $\nu(u)\leq 0$ for all $u\in X$, and $u\in\SOLXF$ if and only if
$\nu(u)= 0$.
If $u\notin\SOLXF$, the gap function can be used to construct an extra constraint to shrink the feasible set of (\ref{eq:KKTopt}).
That is, for a minimizer $u$ of (\ref{eq:KKTopt}) such that $u\notin\SOLXF$, we consider
\be\label{eq:linopt}
\left\{\
\begin{array}{lll}
\displaystyle\varepsilon\,:=\ & \min\limits_{y\in \re^n}\quad  & (y-u)^T\mbF(u)\\
& \st & y\in X.
\end{array}\right.
\ee
To our main interest,
assume (\ref{eq:linopt}) has a minimizer $v$ (otherwise, we may add some extra boundedness constraints, see Section~\ref{sc:popverify}).
Let $\mc{K}':=\mc{K}\cap\{x\in\re^n:(v-x)^T\mbF(x) \geq 0\},$
then we have $\SOLXF\subseteq \mc{K}'$ and $u\notin \mc{K}'$. Therefore, if we solve (\ref{eq:KKTopt}) with a replacement of $\mc{K}$ by the shrinked feasible set $\mc{K}'$,
then another candidate solution to the $\VIXF$ will be obtained.
Indeed, we can repeat this procedure to remove KKT points that are not solutions,
and get tighter approximations to the solution set $\SOLXF$,
until an actual solution is obtained or the nonexistence of solutions is detected.
Based on such a scheme, we propose the following algorithm for finding a solution to the $\VIXF$:

\begin{alg}
\label{ag:KKTSDP} \rm
For $\VIXF$ such that $X$ is given by (\ref{eq:X}) with nonsingular $g$,
do the following:

\begin{description}

\item [Step~0]
Choose a generic positive definite matrix $\Theta$ of length $n+1$.
Set $k:=1$ and $V:=\emptyset$.

\item [Step~1]
Solve the following polynomial optimization problem
\be\label{eq:genopt}
\left\{\
\begin{array}{ll}
\displaystyle\min\limits_{x\in\re^n} \quad  & [x]_1^T\Theta[x]_1\\
\st & x\in \mc{K},\\
    & (v-x)^T\mbF(x) \geq 0~(v\in V).
\end{array}\right.
\ee
If (\ref{eq:genopt}) is infeasible, then $\SOLXF$ is empty and stop;
otherwise, solve (\ref{eq:genopt}) for a minimizer $u^{(k)}$.

\item [Step~2]
Solve the polynomial optimization problem \eqref{eq:linopt} with $u:=u^{(k)}$ for a set of minimizers $T^{(k)}$.

\item [Step~3]
If $\varepsilon= 0$, then $u^{(k)}\in\SOLXF$ and stop; otherwise, let $V:=V\cup T^{(k)}$ and $k:=k+1$, then go back to Step~1.

\end{description}
\end{alg}
\begin{rmk}
(i) In Section~\ref{sc:pop}, we will study how to solve the occurring polynomial optimization subproblems in Algorithm~\ref{ag:KKTSDP}.
(ii) In Step 1, all feasible points of (\ref{eq:genopt}) are KKT points.
Thus, the objective function in (\ref{eq:genopt}) can be replaced by any polynomial function in $x$.
However, using $[x]_1^T\Theta[x]_1$ gives us better convergence results for Algorithm~\ref{ag:KKTSDP},
and makes (\ref{eq:genopt}) easier to solve; see Theorem~\ref{tm:findone} and Section~\ref{sc:findcan}.
(iii) For (\ref{eq:genopt}), we may also replace the constraint $x\in\mc{K}$ by the KKT system (\ref{eq:viKKT}), which means we treat all $\lmd_i$ as new variables without using LMEs.
However, this is not computationally efficient, as shown in Examples~\ref{ep:Kojima} and \ref{ep:Kojima_nosol} in Section \ref{sc:epVI}.
(iv) We do not need the set $T^{(k)}$ in Step~2 to contain all minimizers of \eqref{eq:linopt}.
Indeed, one minimizer is enough to get another solution candidate.
However, we can usually get a tighter approximation to $\SOLXF$ if there are more points in $T^{(k)}$.
When \eqref{eq:linopt} has more than one minimizer, the {\it flat truncation} can be applied to find them; see Section~\ref{sc:pop} for more details.
\end{rmk}

We say Algorithm~\ref{ag:KKTSDP} terminates within finitely many steps if the algorithm finds a solution to $\VIXF$ or detects the nonexistence of solutions at some iteration loop $k$.
In the following, we establish the convergence property for Algorithm~\ref{ag:KKTSDP}.

\begin{thm}
\label{tm:findone}
Consider the $\VIXF$ where $X$ is given by (\ref{eq:X}) with a nonsingular polynomial tuple $g$.
Let $\mathfrak{s}:=\vert \mc{K}\setminus\SOLXF\vert.$
\begin{enumerate}[(i)]
  \item If $\SOLXF=\emptyset$ and $\mathfrak{s}<\infty$, then Algorithm~\ref{ag:KKTSDP} will detect nonexistence of solutions to the $\VIXF$ with $k\le\mathfrak{s}+1$.
  \item If $\SOLXF\ne \emptyset$ and $\mathfrak{s}<\infty$, then Algorithm~\ref{ag:KKTSDP} will find a solution to the $\VIXF$ with $k\le\mathfrak{s}+1$.
  \item Suppose $\mathfrak{s}=\infty$ and Algorithm~\ref{ag:KKTSDP} does not terminate within finitely many steps. If $u^*$ is an accumulation point of $\{u^{(k)}\}_{k=1}^{\infty}$, the gap function $\nu(u)$ is continuous at $u^*$, and $\lim\sup_{k\to\infty}T^{(k)}$ is bounded, then $u^*\in\SOLXF$.
\end{enumerate}
\end{thm}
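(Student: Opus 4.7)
The plan is to derive all three parts from two structural invariants of Algorithm~\ref{ag:KKTSDP}. First, every $x\in\SOLXF$ remains feasible for the master subproblem (\ref{eq:genopt}) at every iteration: by the LICQ implied by nonsingularity of $g$ we have $x\in\mc{K}$, and every $v$ added to $V$ lies in $X$ (as a minimizer of (\ref{eq:linopt})), so the added constraint $(v-x)^T\mbF(x)\ge 0$ is precisely the VI-property of $x$ tested against $v$. Second, a non-terminating iteration $k$ permanently excises $u^{(k)}$ from all subsequent feasible sets: for any $v\in T^{(k)}$ one has $(v-u^{(k)})^T\mbF(u^{(k)})=\varepsilon<0$, so adding this $v$ to $V$ destroys the feasibility of $u^{(k)}$ in all later loops. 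Since every feasible point of (\ref{eq:genopt}) lies in $\mc{K}$, these two facts together imply that the iterates $u^{(1)},u^{(2)},\dots$ produced before termination are pairwise distinct elements of $\mc{K}\setminus\SOLXF$.

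Parts (i) and (ii) then follow by counting. For (i), $\SOLXF=\emptyset$ rules out termination in Step~3, so each of the first $\mathfrak{s}$ iterations produces a distinct element of $\mc{K}\setminus\SOLXF$; by iteration $\mathfrak{s}+1$ the entire $\mc{K}$ is infeasible, so Step~1 declares $\SOLXF=\emptyset$. For (ii), the first invariant keeps $\SOLXF\subseteq$ feasible set, ruling out a Step~1 stop; after at most $\mathfrak{s}$ iterations with $\varepsilon\ne 0$, the non-solution KKT points are exhausted, so $u^{(\mathfrak{s}+1)}$ must lie in $\SOLXF$ and Step~3 triggers termination.

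Part (iii) is the main obstacle and requires a limiting argument. Given a subsequence $u^{(k_j)}\to u^*$, I would pick $v^{(k_j)}\in T^{(k_j)}$; by the boundedness of $\limsup_k T^{(k)}$, a further subsequence (relabeled) converges to some $v^*$, and $v^*\in X$ because $X$ is closed and $v^{(k_j)}\in X$. Combining $\nu(u^{(k_j)})=(v^{(k_j)}-u^{(k_j)})^T\mbF(u^{(k_j)})$ with the continuity of $\nu$ at $u^*$ (and continuity of $\mbF$) gives $\nu(u^*)=(v^*-u^*)^T\mbF(u^*)$. The delicate point is the sign: for each fixed $k_i$ and every later $k_j>k_i$, the constraint $(v^{(k_i)}-u^{(k_j)})^T\mbF(u^{(k_j)})\ge 0$ was imposed in producing $u^{(k_j)}$; letting $k_j\to\infty$ yields $(v^{(k_i)}-u^*)^T\mbF(u^*)\ge 0$, and then $k_i\to\infty$ yields $(v^*-u^*)^T\mbF(u^*)\ge 0$. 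Hence $\nu(u^*)\ge 0$, and since $\nu\le 0$ on $X$ with $u^*\in X$ (by closedness), we get $\nu(u^*)=0$, i.e., $u^*\in\SOLXF$. The main difficulty is orchestrating this double limit correctly: the only available continuity is at the single point $u^*$, so the subsequence extraction and the order of taking $k_j\to\infty$ before $k_i\to\infty$ must be justified without appealing to uniform or joint continuity.
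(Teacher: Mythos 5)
Your proposal is correct and follows essentially the same route as the paper: parts (i)--(ii) rest on the same observation that each non-terminating iterate is a distinct point of $\mc{K}\setminus\SOLXF$ permanently removed from the shrinking feasible sets (the paper phrases this as a strictly descending chain $\mc{K}_1\supsetneq\mc{K}_2\supsetneq\cdots\supseteq\SOLXF$), and part (iii) uses the identical subsequence extraction, the constraints $(v^{(k_i)}-u^{(k_j)})^T F(u^{(k_j)})\ge 0$ for $j>i$, and the continuity of $\nu$ at $u^*$ to conclude $\nu(u^*)=0$. The double limit you worry about is handled exactly as you sketch, since only pointwise continuity of the polynomial map $(u,v)\mapsto(v-u)^T F(u)$ and continuity of $\nu$ at the single point $u^*$ are needed.
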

\begin{proof}
We assume $\mc{K}$ is nonempty. Otherwise, the conclusion follows directly from the fact that $\vert \mc{K}\vert =0$ implies the infeasibility for (\ref{eq:KKTopt}).
Denote by $\mc{K}_k$ the feasible set of (\ref{eq:genopt}) at the $k$th iteration. Then, $\mc{K}_{k+1}\subseteq\mc{K}_{k}$ for all $k\in\N_+$. In addition, if Algorithm~\ref{ag:KKTSDP} does not terminate at the $k$th loop, then $u^{(k)}\notin\mc{K}_{k+1}$. Thus,
we have the following strictly descent chain:
\[\mc{K}=\mc{K}_1\supsetneq \mc{K}_2 \dots \supsetneq \mc{K}_k\supsetneq \dots\supsetneq \SOLXF.\]
Since $\mathfrak{s}=\vert \mc{K}\setminus\SOLXF\vert <\infty$, there exists $k_1\in\N_+$ such that $k_1\le\mathfrak{s}+1$ and $\mc{K}_{k_1}=\SOLXF$.

(i) If $\SOLXF=\emptyset$, then problem (\ref{eq:genopt}) with $k=k_1$ is infeasible, which means the $\VIXF$ does not have any solution.
Therefore, Algorithm~\ref{ag:KKTSDP} can detect the nonexistence of solutions to $\VIXF$ in at most $k_1$ loops.

(ii) If $\SOLXF\ne\emptyset$, then every feasible point of (\ref{eq:genopt}) with $k=k_1$ is a solution to $\VIXF$.
Therefore, Algorithm~\ref{ag:KKTSDP} can find a solution to $\VIXF$ in at most $k_1$ loops.

(iii)
Assume that Algorithm~\ref{ag:KKTSDP} does not terminate within finitely many steps,
and there exists an accumulation point $u^*$ of sequence $\{u^{(k)}\}_{k=1}^{\infty}$.
Without loss of generality, we assume $u^{(k)}\to u^*$ as $k\to\infty$.
Note that $u^*$ is a solution to $\VIXF$ if and only if $\nu(u^*)=0$.
For each $i=1,2,\dots$,
let $v^{(i)}$ be a minimizer of \eqref{eq:linopt} for $u:=u^{(i)}$.
Since $u^{(i)}\notin\SOLXF$, we have \[(v^{(i)}-u^{(i)})^T\mbF(u^{(i)})=\nu(u^{(i)})<0.\]
For each $j>i$, based on the feasibility of \eqref{eq:genopt} at $u^{(j)}$,
we have
\[(v^{(i)}-u^{(j)})^T\mbF(u^{(j)})\ge0.\]
Due to the continuity of $\mbF$, we have the $(v^{(i)}-x)^T\mbF(x)$ is continuous in $x$
for every given $v^{(i)}$. Hence,
\[(v^{(i)}-u^*)^T\mbF(u^*)\ge0\fall i\in \N_+.\]
Moreover, we have the following for all $i\in \N_+.$
\[\begin{split}
\nu(u^*)&=\nu(u^*)-\nu(u^{(i)})+\nu(u^{(i)})\\
        &\ge \nu(u^*)-\nu(u^{(i)})+(v^{(i)}-u^{(i)})^T\mbF(u^{(i)})-(v^{(i)}-u^*)^T\mbF(u^*).
\end{split}\]
From the assumption that $\lim\sup_{k\to\infty}T^{(k)}$ is bounded,
there exists a subsequence of $\{v^{(j)}\}_{j=1}^{\infty}$, say $\{v^{(k_j)}\}_{j=1}^{\infty}$, being convergent.
Denote by $v^*$ the limit point of $\{v^{(k_j)}\}_{j=1}^{\infty}$.
Then, we have
\be\label{eq:nuge}
\nu(u^*)\ge \nu(u^*)-\nu(u^{(k_j)})+(v^{(k_j)}-u^{(k_j)})^T\mbF(u^{(k_j)})-(v^{(k_j)}-u^*)^T\mbF(u^*)
\ee
hold for all $j\in \N_+$.
Let $j\to\infty$, the right hand side of (\ref{eq:nuge}) equals to $0$,
because $\nu$ is continuous at $u^*$,
and $(v-u)^T\mbF(u)$ is continuous in variable of $(u,v)$.
So, we have that $\nu(u^*)\ge0$. Besides, $\nu(u)\le 0$ for all $u\in X$ based on the definition. Therefore, we conclude that $\nu(u^*)=0$, which means $u^*\in\SOLXF$.
\qed\end{proof}
We remark that if the sequence $\{u^{(k)}\}_{k=1}^{\infty}$ generated by Algorithm~\ref{ag:KKTSDP} has no accumulation point, then $\SOLXF=\emptyset$.
To see this, if $\{u^{(k)}\}_{k=1}^{\infty}$ does not have any accumulation point,
then it must be unbounded.
Note that the objective function $\theta(x):=[x]_1^T\Theta[x]_1$ is a quadratic positive definite polynomial.
The $\theta(u^{(k)})$ goes to $\infty$ as $k\to \infty$.
Suppose $x^*\in\SOLXF$.
Then, we have $\infty>\theta(x^*)\ge \theta(u^{(1)})$,
otherwise $u^{(1)}$ cannot be the minimizer for (\ref{eq:KKTopt}).
Since $\lim_{k\to\infty}\theta(u^{(k)})=\infty$,
there exists $k_1\in\N$ such that
$\theta(u^{(k_1)})\le \theta(x^*) <\theta(u^{(k_1+1)}).$
Because $x^*\in\SOLXF$,
$x^*$ satisfies the KKT condition and for any $v\in X$ the following holds
\[(v-x^*)^T\mbF(x^*)\ge 0.\]
It means that $x^*$ is feasible for (\ref{eq:genopt}) at the $(k_1+1)$th loop,
which contradicts to $u^{(k_1+1)}$ being a minimizer of (\ref{eq:genopt}).
So such $x^*$ does not exist and we conclude the emptiness of $\SOLXF$ if the sequence $\{u^{(k)}\}_{k=1}^{\infty}$ has no accumulation point.

The continuity condition of $\nu(x)$ holds under certain conditions, for example, the {\it restricted inf-compactness} (RIC) condition;
see \cite[Definition~3.13]{GuoLinYeZhang}.
However, to implement Algorithm~\ref{ag:KKTSDP} does not require any {\it a priori} knowledge on the finiteness of $\mathfrak{s}$ or the continuity of $\nu$.
Indeed, even if $\mathfrak{s}=\infty$, Algorithm~\ref{ag:KKTSDP} may still terminate in finitely many steps.
In all numerical experiments in Section~\ref{sc:ne}, our method finds solutions to the VIPs or detects the nonexistence of solutions in finitely many steps.
To end this section, we demonstrate the following result saying that if the $\VIXF$ is given by generic polynomials $\mbF$ and $g$,
then $\mc{K}$ is a finite set,
which guarantees the finite termination for Algorithm~\ref{ag:KKTSDP}.
The proof is given in the appendix.

\begin{thm}
\label{tm:solfinite}
Let $a_1,a_2\ddd a_n,b_1,b_2\ddd b_m\in\N_+$ be degrees.
Suppose the $\VIXF$ is defined by polynomials such that $F_i(x)$ is a generic polynomial in $\cpx[x]_{a_i}$ for all $i\in [n]$,
and $g_j(x)$ is a generic polynomial in $\cpx[x]_{b_j}$ for all $j\in [m]$.
Then, every $x\in\SOLXF$ is a KKT point of the $\VIXF$,
and we have $\vert \mc{K}\vert <\infty$.
\end{thm}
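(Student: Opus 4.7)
The plan is to establish the two assertions separately.

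For the inclusion $\SOLXF\subseteq\mc{K}$, I would use the fact already recorded in Section~\ref{subs:KKT} via \cite[Proposition~5.7]{Nie2019}: for generic $g$ the matrix polynomial $G(x)$ in \reff{eq:Clmd=df} has full column rank $m$ at every $x\in\cpx^n$. A direct linear-algebra manipulation deduces the LICQ at every feasible $x\in X$: any vanishing combination $\sum_{g_i(x)=0}c_i\nabla g_i(x)=0$ extends by zeros on inactive indices to a vanishing combination of the columns of $G(x)$ (the lower blocks vanish automatically because $c_i=0$ has been imposed wherever $g_i(x)\ne 0$), contradicting full column rank unless all $c_i=0$. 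Under LICQ, every $\VIXF$-solution is a KKT pair whose multiplier is uniquely prescribed by the LME $\lmd(x)$, and therefore lies in $\mc{K}$.

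For the finiteness of $\mc{K}$, it is enough to show the complex equality variety $\mc{Z}(E)\subseteq\cpx^n$ is generically finite, since $\mc{K}\subseteq\mc{Z}(E)$. The complementarity products $\lmd_i(x)g_i(x)=0$ decompose $\mc{Z}(E)=\bigcup_{J\subseteq\mc{I}}\mc{Z}_J$, where $\mc{Z}_J\subseteq\cpx^n$ is defined by
\begin{align*}
\mbF(x)-{\sum}_{i=1}^m\lmd_i(x)\nabla g_i(x)&=0,\\
g_i(x)&=0\quad(i\in\mc{E}\cup J),\\
\lmd_i(x)&=0\quad(i\in\mc{I}\setminus J).
\end{align*}
Since the family of subsets $J\subseteq\mc{I}$ is finite, it suffices to prove each $\mc{Z}_J$ is finite for generic $(\mbF,g)$.

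For fixed $J$, my approach is a dimension count on an incidence variety. Let $V$ denote the affine parameter space of admissible $(\mbF,g)$-tuples of the prescribed degrees, and let $\wt{\mc{Z}}_J\subseteq V\times\cpx^n\times\cpx^m$ be cut out by the same equations above with the $\lmd$'s treated as free variables rather than substituted via LMEs. This defining system consists of $n+m$ polynomial equations. A Jacobian computation at a sufficiently generic point shows the $n$ stationarity equations are linearly independent in the coefficients of $\mbF$, the $|\mc{E}\cup J|$ active-constraint equations are linearly independent in independent coefficients of the corresponding $g_i$'s, and the remaining $|\mc{I}\setminus J|$ equations simply fix individual $\lmd$-coordinates. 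These $n+m$ rows are therefore linearly independent, so $\dim\wt{\mc{Z}}_J=\dim V$. By the theorem on fiber dimension, the projection $\wt{\mc{Z}}_J\to V$ has generically zero-dimensional (hence finite) fibers, which implies $\mc{Z}_J$ is finite over a Zariski-open dense subset of $V$. Intersecting these open loci over the finitely many $J\subseteq\mc{I}$ then completes the proof.

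The hard part will be rigorously ruling out excess-dimensional irreducible components of $\wt{\mc{Z}}_J$ that could dominate $V$ and force non-finite fibers on a dense subset. The Jacobian argument above certifies the expected dimension $\dim V$ for the component through a given smooth point; to exclude bad components, I would either exhibit an explicit smooth point in each candidate component (using the freedom to perturb $\mbF$ and each $g_i$ independently), or invoke an intersection-theoretic upper bound on the number of common complex zeros of the KKT equations, as alluded to in the introduction and as is standard for generic polynomial systems of matched size.
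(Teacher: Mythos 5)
Your first assertion ($\SOLXF\subseteq\mc{K}$ via LICQ from the nonsingularity of $G(x)$) is essentially the paper's argument, which derives LICQ from the generic full column rank of the gradient matrix. For the finiteness of $\mc{K}$, however, you take a genuinely different route: the paper homogenizes, applies Bertini's theorem to get $\dim\wtU=n-m$, covers $\wtU$ by charts $\mc{X}_\sigma$ on which an $m\times m$ gradient minor is invertible, and then cuts the dimension down by one at a time using the $n-m$ maximal minors $\mathfrak{m}_{s_0}$ of $\wt{J}$ --- the crucial point being that $\mathfrak{m}_{s_0}$ depends only on the coefficients of $F_{s_0}$ among the $F_i$'s, so Shafarevich's Theorem~1.22 can be invoked once per minor. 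Your affine incidence-variety argument over the parameter space $V$, combined with the fiber-dimension theorem, is a legitimate and in some ways more elementary alternative that avoids homogenization and the chart decomposition (the paper's projective setup also feeds into the degree bound of Theorem~\ref{tm:deg}, which your approach would not directly give).

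The gap is the one you flag yourself, and as written it is real: a Jacobian computation at a single ``sufficiently generic point'' of $\wt{\mc{Z}}_J$ only controls the dimension of components passing through that point, while an excess-dimensional component elsewhere could dominate $V$ and force infinite generic fibers. Neither of your proposed repairs is carried out, and the intersection-theoretic one would not work directly, since B\'ezout-type statements (as in Theorem~\ref{tm:bezout}) take proper intersection as a hypothesis rather than delivering it. The clean fix is to bound the fibers of the \emph{other} projection $\wt{\mc{Z}}_J\to\cpx^n\times\cpx^m$: the equations $\lmd_i=0\ (i\in\mc{I}\setminus J)$ involve no parameters, and over any fixed $(x,\lmd)$ with those coordinates vanishing, the remaining $n+\vert\mc{E}\cup J\vert$ equations are affine-linear in the coefficients of $(\mbF,g)$; since the constant coefficient of $F_j$ appears with unit coefficient only in the $j$th stationarity equation (gradients kill constants) and the constant coefficient of $g_i$ only in the equation $g_i(x)=0$, these linear conditions are independent at \emph{every} such $(x,\lmd)$, so every fiber is an affine subspace of $V$ of codimension exactly $n+\vert\mc{E}\cup J\vert$. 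The fiber-dimension theorem then gives $\dim\wt{\mc{Z}}_J\le\dim V$ for every irreducible component, which rules out excess components and completes your argument. With that observation supplied, your proof closes; without it, the decisive step is missing.
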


\subsection{Finding more solutions to the polynomial VIP}
\label{sc:findmore}

In this subsection, we further investigate how to find more solutions or check the completeness of the computed solution set for the $\VIXF$, based on Algorithm~\ref{ag:KKTSDP}.

Suppose Algorithm~\ref{ag:KKTSDP} terminates at a solution $x^*$,
and $x^*$ is an isolated point in the feasible region at the last iteration of Algorithm~\ref{ag:KKTSDP}.
Since the matrix $\Theta$ in the objective function of \reff{eq:genopt} is generic and positive definite, by \cite[Proposition~5.2]{nie2014moment},
there exists $\delta>0$ such that for all $x\in\SOLXF\setminus\{x^*\}$,
\be\label{eq:moreoptbd}
[x]_1^T\Theta[x]_1 \ge [x^*]_1^T\Theta [x^*]_1+\delta.
\ee
Therefore, if we add \reff{eq:moreoptbd} as an extra constraint to \reff{eq:genopt},
then we will get a new candidate solution for the $\VIXF$ other than $x^*$.
That is, we solve the following polynomial optimization problem:
\be\label{eq:moreopt}
\left\{\
\begin{array}{ll}
\displaystyle\min\limits_{x\in\re^n} \quad  & [x]_1^T\Theta[x]_1\\
\st & x\in \mc{K}_{x^*},
\end{array}\right.
\ee
where \[\mc{K}_{x^*}:=\{x\in \mc{K}~\vert ~(v-x)^T\mbF(x) \geq 0~(v\in V),~[x]_1^T\Theta[x]_1 \ge [x^*]_1^T\Theta [x^*]_1+\delta\}.\]
If (\ref{eq:moreopt}) is infeasible, then $\SOLXF=\{x^*\}$.
Otherwise, (\ref{eq:moreopt}) must have a minimizer $u$ due to the positive definiteness of $\Theta$,
and we can check whether $u\in\SOLXF$ or not by solving (\ref{eq:linopt}).

In practical implementation,
one needs a suitable $\delta$ to formulate (\ref{eq:moreopt}).
Given $x^*$ and $\mc{K}_{x^*}$, the following trick is useful for finding the $\delta$.
By \cite[Proposition~3.5]{Nie2020nash} (see also \cite{NieLoc}), if $x^*$ is isolated in $\mc{K}_{x^*}$,
then there exists $\hat{\delta}>0$ such that
\[\mc{K}_{x^*}\cap\{x\in\re^n:[x]_1^T\Theta[x]_1\le [x^*]_1^T\Theta [x^*]_1+\hat{\delta}\}=\{x^*\}.\]
Moreover, since $\SOLXF\subseteq\mc{K}_{x^*}$,
all $\dt$ satisfying $0<\dt\le\hat{\dt}$ is available for (\ref{eq:moreopt}).
For {\it a priori} $\dt_0>0$, let $\dt=\dt_0$ and
consider the following maximization problem
\be\label{eq:max}
\left\{\ \begin{array}{lll}
\gamma^*:=&\max\limits_{x\in\re^n}\quad & [x]_1^T\Theta[x]_1\\
&\st & x\in \mc{K},~ (v-x)^T\mbF(x)\geq 0\, (v\in V),\\
&& [x]_1^T\Theta[x]_1 \le [x^*]_1^T\Theta [x^*]_1+\delta.
\end{array}\right.
\ee
If $\gamma^*=[x^*]_1^T\Theta [x^*]_1$,
then $\dt$ is applicable for (\ref{eq:moreopt}).
Otherwise, we update $\dt:=\rho\cdot\dt$ for some $\rho<1$ and solve (\ref{eq:max}) again.
Repeating these steps, we may find more solutions to the $\VIXF$.

\begin{alg}
% \caption{An algorithm for finding another solution to $\VIXF$.}
\label{ag:more} \rm
Under the same assumptions as in Algorithm~\ref{ag:KKTSDP},
suppose we have found a solution $x^*$ to the $\VIXF$.
Do the following:

\begin{description}

\item [Step~0]
Choose a $\dt_0>0$ and a positive $\rho<1$.
Let $\dt\defeq \dt_0$.

\item [Step~1]
Solve the polynomial optimization \reff{eq:max} for the maximal value $\gm^*$.

\item [Step~2]
If $\gamma^*=[x^*]_1^T\Theta [x^*]_1$, then proceed to the next step;
otherwise, let $\dt:=\dt\cdot\rho$ and go back to Step~1.

\item [Step~3]
Solve the polynomial optimization \reff{eq:moreopt}.
If it is infeasible, then there does not exist any other solution for $\VIXF$.
Otherwise, solve \reff{eq:moreopt} for a minimizer $u$.

\item [Step~4]
Solve the linear optimization problem (\ref{eq:linopt}) with $u$ for a set of minimizers $T$.

\item [Step~5]
If $\varepsilon=0$, then $u\in\SOLXF$ is another solution and stop;
otherwise, let $V:=V\cup T$, then go back to Step~3.

\end{description}
\end{alg}
When $x^*\in\SOLXF$ is isolated in $\mc{K}_{x^*}$,
we can always find a suitable $\dt$ for (\ref{eq:genopt}) by solving (\ref{eq:max}) for finitely many times.
Therefore, if $|\mc{K}\setminus \SOLXF|$ is finite,
Algorithm~\ref{ag:more} will surely produce a new solution within finitely many steps, by Theorem~\ref{tm:findone}.
Indeed,
when $\Theta$ is generic and all points in $\SOLXF$ are isolated,
we have the following hierarchy of solutions
\begin{multline}
\label{eq:hiersol}
[x^{(1)}]_1^T\Theta[x^{(1)}]_1<
[x^{(2)}]_1^T\Theta[x^{(2)}]_1<
[x^{(3)}]_1^T\Theta[x^{(3)}]_1<\cdots\\
\mbox{with} \quad \SOLXF=\{x^{(1)},x^{(2)},x^{(3)},\dots\}.
\end{multline}
As listed in (\ref{eq:hiersol}), the solution first obtained by Algorithm~\ref{ag:KKTSDP} is $x^{(1)}$,
and Algorithm~\ref{ag:more} finds $x^{(k)}$ with $x^*:=x^{(k-1)}$.
Particularly,
when $\vert \SOLXF\vert =:r<\infty$, such a scheme finds all solutions to $\VIXF$,
as long as $x^{(k)}$ is isolated in $\mc{K}_{x^{(k-1)}}$ for each $k<r$.
Furthermore, if we apply Algorithm~\ref{ag:more} with $x^*:=x^{(r)}$,
then the algorithm terminates at {Step~3} because the feasible set $\mc{K}_{x^*}$ of (\ref{eq:moreopt}) must be empty for some $V$,
and we conclude that a complete solution set is obtained.
\begin{thm}\label{tm:all}
Consider the $\VIXF$ where $X$ is given by (\ref{eq:X}) with a nonsingular polynomial tuple $g$.
If $\vert \mc{K}\vert <\infty$, then all solutions to $\VIXF$ can be obtained by running Algorithm~\ref{ag:more} finitely many times.
\end{thm}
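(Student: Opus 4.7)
The plan is to build on Theorem~\ref{tm:findone} and argue by induction on the number of produced solutions, using the finiteness of $\mc{K}$ in two ways: to guarantee finite termination of every single call to Algorithm~\ref{ag:more}, and to bound the total number of such calls.

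Since $|\mc{K}|<\infty$ and $\SOLXF\subseteq\mc{K}$, both $\SOLXF$ and $\mc{K}\setminus\SOLXF$ are finite, and every point $x^*\in\SOLXF$ is trivially isolated in $\mc{K}$, hence isolated in any subset $\mc{K}_{x^*}\subseteq\mc{K}$. First I would apply Theorem~\ref{tm:findone}(ii) to obtain an initial solution $x^{(1)}$ from Algorithm~\ref{ag:KKTSDP} in finitely many steps. (If instead $\SOLXF=\emptyset$, part~(i) of that theorem certifies it and there is nothing else to prove.)

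For the inductive step, suppose solutions $x^{(1)},\dots,x^{(k)}$ have already been produced. I would invoke Algorithm~\ref{ag:more} with $x^*:=x^{(k)}$ and verify that its two internal loops both terminate. For the loop Step~1--Step~2, the isolation of $x^*$ in $\mc{K}_{x^*}$ together with \cite[Proposition~3.5]{Nie2020nash} supplies a $\hat\delta>0$ such that any $\delta\in(0,\hat\delta]$ produces $\gamma^*=[x^*]_1^T\Theta[x^*]_1$ in (\ref{eq:max}); since $\delta$ is multiplied by $\rho\in(0,1)$ at every failed trial, this loop stops after finitely many iterations. For the loop Step~3--Step~5, the reasoning mirrors the proof of Theorem~\ref{tm:findone}: each update $V\leftarrow V\cup T$ strictly shrinks the feasible set of (\ref{eq:moreopt}) by excluding at least one KKT point that is not a solution, and the finiteness of $\mc{K}\setminus\SOLXF$ forces termination within at most $|\mc{K}\setminus\SOLXF|+1$ passes, yielding either a new solution $x^{(k+1)}$ or infeasibility of (\ref{eq:moreopt}).

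To bound the number of outer calls to Algorithm~\ref{ag:more}, I would note that the extra constraint $[x]_1^T\Theta[x]_1\ge [x^*]_1^T\Theta[x^*]_1+\delta$ enforces the strictly increasing chain of objective values $\theta(x^{(1)})<\theta(x^{(2)})<\cdots$ displayed in (\ref{eq:hiersol}), where $\theta(x):=[x]_1^T\Theta[x]_1$. Since each $x^{(k)}$ lies in the finite set $\SOLXF$, no more than $|\SOLXF|$ new solutions can be produced; on the next call the feasible set $\mc{K}_{x^*}$ of (\ref{eq:moreopt}) must be empty, Step~3 detects infeasibility, and the procedure stops with a complete solution list. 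The main subtlety I expect to treat carefully is the verification that the threshold $\theta(x)\ge\theta(x^*)+\delta$ never accidentally excludes an undiscovered element of $\SOLXF$; this is precisely what the adaptive choice of $\delta$ via (\ref{eq:max}) achieves, and it relies on the generic positive definiteness of $\Theta$ so that $\theta$ separates the finitely many isolated points of $\SOLXF$.
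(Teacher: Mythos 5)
Your proposal is correct and follows essentially the same route as the paper, which justifies Theorem~\ref{tm:all} via the discussion immediately preceding it: finiteness of $\mc{K}$ makes every solution isolated, each call of Algorithm~\ref{ag:more} terminates (the $\delta$-loop by \cite[Proposition~3.5]{Nie2020nash} and the Step~3--5 loop by the argument of Theorem~\ref{tm:findone}), and the increasing chain (\ref{eq:hiersol}) together with $\vert\SOLXF\vert<\infty$ bounds the number of outer calls until (\ref{eq:moreopt}) becomes infeasible. Your added care about the adaptive choice of $\delta$ not excluding undiscovered solutions is exactly the role of (\ref{eq:max}) in the paper's argument.
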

It is clear that when the $\VIXF$ is given by generic polynomials,
we can find all solutions to the $\VIXF$ because $\vert \mc{K} \vert <\infty$, by Theorems~\ref{tm:solfinite} and \ref{tm:all}.

\section{Solving the polynomial optimization subproblems}
\label{sc:pop}
In this section, we study the method for handling polynomial optimization subproblems appearing in Algorithms~\ref{ag:KKTSDP} and~\ref{ag:more}.
In general, we consider a polynomial optimization problem in the following form
\be\label{eq:pop}
\left\{\ \begin{array}{lll}
\vartheta\defeq &\min\limits_{x\in\re^n}\quad & \theta(x)\\
&\st & p(x)=0\, (p\in\Phi),\\
&    & q(x)\ge0\, (q\in\Psi),
\end{array}
\right.
\ee
where $\theta(x)$ is a polynomial function in $x$,
and $\Phi,\Psi$ are tuples of polynomials in $x$.

We introduce the Moment-SOS hierarchy of semidefinite relaxations for (\ref{eq:pop}).
Denote
\be\label{eq:d0}
d_0 \, := \, \max\{\lceil\deg(p)/2\rceil: \, p \in \Phi \cup \Psi\cup \{\theta\} \}.
\ee
For a degree $k \ge d_0$, the $k$th order moment relaxation for solving (\ref{eq:pop}) is
\be
\label{eq:d-mom}
\left\{
\baray{rl}
\vartheta_k  \, :=  \,\min\limits_{ y } & \lip \theta,y \rip\\
 \st & y_0=1,\, L_{p}^{(k)}[y] = 0 \, (p \in \Phi), \\
  & M_k[y] \succeq 0,\, L_{q}^{(k)}[y] \succeq 0 \, (q \in \Psi),  \\
  &  y \in \mathbb{R}^{\mathbb{N}^{n}_{2k}},
\earay
\right.
\ee
where $M_k[y]$ is the $k$th order moment matrix generated by $y$, and $L_{p}^{(k)}[y]$, $L_{q}^{(k)}[y]$ are $k$th localizing matrices of $p,q$ generated by $y$. Its dual problem is the $k$th order SOS relaxation for (\ref{eq:pop})
\be
\label{eq:d-sos}
\left\{
\baray{lll}
\gamma_k\defeq & \max & \gamma\\
&\st & \theta -\gamma \in  \idl[\Phi]_{2k}+\qmod[\Psi]_{2k}. \\
\earay
\right.
\ee
For relaxation orders $k=d_0, d_0+1, \ldots$,
we get the Moment-SOS hierarchy of semidefinite relaxations
\reff{eq:d-mom}-\reff{eq:d-sos}.
The optimization \reff{eq:d-mom} is a relaxation of \reff{eq:pop}.
In fact, if $x$ is a feasible point of \reff{eq:pop},
then $y = [x]_{2k}$ is feasible for \reff{eq:d-mom}, while the infeasibility of \reff{eq:d-mom} implies the infeasibility of \reff{eq:pop}.
On the other hand,
$\vartheta_k$ and $\gamma_k$ provide lower bounds for $\vartheta$ for every $k$,
that \[\gamma_k\le \vartheta_k \le \vartheta.\]
Let $e_i$ be the labeling vector such that its $i$th entry is $1$ while all other entries are $0$s,
and $y_{e_i}$ be the subvector of the tms $y$ whose entries are labeled by $e_i$
(for instance, when $n=4$,  $y_{e_3} = y_{0010}$).
For a given $k$, let $y^{(k)}$ be a minimizer of the $k$th moment relaxation (\ref{eq:d-mom}), and
$u^{(k)}\,\coloneqq\, (y^{(k)}_{e_1} \ddd y^{(k)}_{e_n}).$
Then $u^{(k)}$ must be a minimizer of \reff{eq:genopt} if (we refer to Section~\ref{sc:pre} for definitions of $\mc{Z}$ and $\mc{S}$)
\be \label{eq:uniquemin}
u^{(k)}\in\mc{Z}(\Phi)\cap\mc{S}(\Psi),\quad \vartheta_k = \theta(u^{(k)}).
\ee
However, (\ref{eq:uniquemin}) usually does not hold when there is more than one minimizer.
For such cases, the {\it flat truncation} can be applied to certify global optimality and obtain minimizers.
To conclude, we summarize the following algorithm, namely, the Moment-SOS hierarchy of semidefinite relaxations, for solving polynomial optimization problems:
\begin{alg} \label{ag:momwithFT} \rm
% \caption{The Moment-SOS hierarchy of semidefinite relaxations}
For the polynomial optimization problem \reff{eq:pop}.
Initialize $k := d_0$.
\begin{description}

\item [Step~1]
Solve the moment relaxation (\ref{eq:d-mom})
for the minimum value $\vartheta_k$.
If (\ref{eq:d-mom}) is infeasible,
then the polynomial optimization problem is infeasible.
Otherwise, solve (\ref{eq:d-mom}) for a minimizer $y^*$ and let $t:=d_0$.

\item [Step~2]
Let $u^*\,\coloneqq\, (y^*_{e_1} \ddd y^*_{e_n})$.
If $u^*$ satisfies (\ref{eq:uniquemin}),
then $u^*$ is the minimizer for (\ref{eq:pop}) and stop.

\item [Step~3]
If $y^*$ satisfies the rank condition
\be \label{eq:flatrank}
 \Rank{M_t[y^*]} \,=\, \Rank{M_{t-d_0}[y^*]} ,
\ee
then extract a set $U_i$ of
$r :=\Rank{M_t[y^*]}$ minimizers for (\ref{eq:pop}) and stop.

\item [Step~4]
If \reff{eq:flatrank} fails to hold and $t < k$,
let $t := t+1$ and then go to Step~3;
otherwise, let $k := k+1$ and go to Step~1.

\end{description}
\end{alg}

For Algorithm~\ref{ag:momwithFT},
we say it has asymptotic convergence if $\vartheta_k\to \vartheta$ as $k\to\infty$,
and we say it has finite convergence if there exists $K\in\N$ such that $\vartheta_k= \vartheta$ for all $k\ge K$.
In Step~3, the rank condition~\reff{eq:flatrank} is called \textit{flat truncation} \cite{nie2013certifying}.
It is a sufficient (and almost necessary) condition to check the finite convergence
of moment relaxations.
When \reff{eq:flatrank} holds, the method in \cite{HenLas05}
can be used to extract $r$ minimizers for \reff{eq:pop}.
This method and Algorithms~\ref{ag:momwithFT} are implemented in the software {\tt GloptiPoly 3}~\cite{GloPol3}.
In the following subsections, we discuss how to use Algorithm~\ref{ag:momwithFT} for solving polynomial optimization problems (\ref{eq:linopt}-\ref{eq:genopt}) and (\ref{eq:moreopt}-\ref{eq:max}), respectively.

\subsection{Finding solution candidates to polynomial VIPs}
\label{sc:findcan}
In this subsection, we discuss how to solve the polynomial optimization for obtaining candidate solutions to polynomial VIPs, particularly for solving the problem (\ref{eq:genopt}).
Note that (\ref{eq:genopt}) can be rewritten as (\ref{eq:pop}) with
\be\label{eq:pp2}
\left\{
\begin{aligned}
\ \Phi&\defeq  E,\quad \theta(x)\defeq [x]_1^T\Theta[x]_1,
\\
%%%%
\ \Psi&\defeq  I  \cup\{ (v-x)^T\mbF(x)\, :\, v\in V\}.
\end{aligned}\right.
\ee
Since the $\theta(x)$ is a generic positive definite quadratic polynomial,
the problem (\ref{eq:genopt}) has a unique minimizer, as we show in Theorem~\ref{thmcvg:upconv}.
In practice, we usually let $\theta(x)\defeq [x]_1^TR^TR[x]_1$ with $R$ being a randomly generated $(n+1)\times (n+1)$ matrix.
As we mentioned before,
the infeasibility of (\ref{eq:d-mom}) infers the infeasibility of (\ref{eq:genopt}),
which further implies the nonexistence of the KKT points.
Moreover, when \reff{eq:genopt} is feasible, the optimal value $\vartheta_k$ of \reff{eq:d-mom}
is a lower bound for the minimum value of \reff{eq:genopt},
i.e., $\vartheta_k \leq [x]_1^T\Theta[x]_1$ for all $x$
that is feasible for \reff{eq:genopt}.

% The Algorithm~\ref{ag:KKTopt} can be implemented in {\tt GloptiPoly} \cite{GloPol3}.
The convergence of Algorithm~\ref{ag:momwithFT} for solving (\ref{eq:genopt}) is shown as follows.

\begin{thm} \label{thmcvg:upconv}
Assume the set $\idl[\Phi]+\qmod[\Psi] \subseteq \re[x]$ is archimedean.
\bit

\item [(i)]
If \reff{eq:genopt} is infeasible,
then the moment relaxation \reff{eq:d-mom}
is infeasible when $k$ is big enough.

\item [(ii)]
Suppose \reff{eq:genopt} is feasible and
$\Theta$ is a generic positive definite polynomial.
Then \reff{eq:genopt} has a unique minimizer $u$,
and $u^{(k)}\to u$ as $k\to\infty$.
Furthermore,
if $\Phi(x)=0$ have finitely many real solutions,
then Algorithm~\ref{ag:momwithFT} has finite convergence with (\ref{eq:flatrank}) satisfied.
\eit
\end{thm}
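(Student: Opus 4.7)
The plan is to treat the two parts of the theorem separately, leaning on the archimedean assumption at every turn. For part (i), I would observe that infeasibility of \reff{eq:genopt} means $\mc{Z}(\Phi)\cap\mc{S}(\Psi)=\emptyset$, so the constant polynomial $-1$ is (vacuously) positive on the feasible set. Putinar's Positivstellensatz, which is applicable because $\idl[\Phi]+\qmod[\Psi]$ is archimedean, then yields $-1\in\idl[\Phi]+\qmod[\Psi]$, and hence $-1\in\idl[\Phi]_{2k}+\qmod[\Psi]_{2k}$ for all sufficiently large $k$. This makes the SOS relaxation \reff{eq:d-sos} unbounded above, so by weak duality the moment relaxation \reff{eq:d-mom} must be infeasible for all such $k$.

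For the existence and uniqueness of the minimizer in part (ii), I would first note that the archimedean property forces $\mc{Z}(\Phi)\cap\mc{S}(\Psi)$ to be compact, and the objective $[x]_1^T\Theta[x]_1$ is a strongly convex quadratic whenever $\Theta\succ 0$, so a minimizer exists on the nonempty compact feasible set. Uniqueness for generic $\Theta$ follows from a standard perturbation argument: if $u_1\neq u_2$ were two minimizers, then $\Theta$ would have to lie in the algebraic variety $\{[u_1]_1^T\Theta[u_1]_1=[u_2]_1^T\Theta[u_2]_1\}$, a proper Zariski-closed subset of the positive-definite cone. Hence, away from a measure-zero set of $\Theta$, the minimizer $u$ is unique.

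Asymptotic convergence $u^{(k)}\to u$ is then the classical Lasserre convergence: by \cite{Las01} and the subsequent moment-SOS literature, the archimedean property gives $\vartheta_k\to\vartheta$. Any accumulation point of the sequence $(y^{(k)}_{e_1},\dots,y^{(k)}_{e_n})$ is feasible for \reff{eq:genopt} and attains the minimum value, hence by uniqueness must equal $u$; boundedness of the sequence (again from the archimedean assumption, which controls the first-order moments) then promotes this to full convergence $u^{(k)}\to u$.

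The finite-convergence statement is the step that needs the most care. Assuming $\mc{Z}_\re(\Phi)$ is finite, the feasible set of \reff{eq:genopt} is also finite, and in particular $u$ is an isolated feasible point; combined with the fact that $u$ is the unique minimizer, the standard optimality conditions at $u$ are satisfied strictly (the objective's positive-definite Hessian and LICQ on the finite real variety). I would then appeal to the finite-convergence theorem for the moment-SOS hierarchy (see \cite{nie2014optimality,nie2013certifying}), which shows that under archimedeanness together with finiteness of $\mc{Z}_\re(\Phi)$, the hierarchy terminates in finitely many steps and the flat truncation condition \reff{eq:flatrank} eventually holds for some $t\le k$. The main obstacle here is invoking the correct finite-convergence result in the form needed; everything else reduces to verifying the hypotheses of already established theorems.
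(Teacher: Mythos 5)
Your part (i) is exactly the paper's argument: $-1$ is vacuously positive on the empty feasible set, Putinar's Positivstellensatz gives $-1\in\idl[\Phi]_{2k}+\qmod[\Psi]_{2k}$ for large $k$, the SOS relaxation is then unbounded above, and weak duality forces infeasibility of the moment relaxation. The existence/uniqueness and asymptotic-convergence claims in part (ii) also follow the paper's route, which simply cites \cite[Proposition~5.2]{nie2014moment} for uniqueness and \cite{Swg05} or \cite[Theorem~3.3]{nie2013certifying} for $u^{(k)}\to u$. One caution on your uniqueness sketch: for a fixed pair $u_1\neq u_2$ the bad set $\{\Theta: [u_1]_1^T\Theta[u_1]_1=[u_2]_1^T\Theta[u_2]_1\}$ is indeed a proper subvariety, but the feasible set may contain uncountably many pairs, so you cannot conclude measure zero by taking a union over pairs; this is precisely why the paper defers to the cited proposition rather than arguing it inline.

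The genuine problem is in your finite-convergence step. You try to verify the hypotheses of the optimality-conditions finite-convergence theorem of \cite{nie2014optimality} by asserting that LICQ and strict optimality conditions hold at $u$ because the real variety is finite. That implication is false: $\Phi$ here consists of $n+|\mc{I}|+|\mc{E}|$ polynomial equations in $n$ variables (the tuple $E$ from the KKT reformulation), so the gradients of the active equality constraints generally cannot be linearly independent, and finiteness of $\mc{Z}(\Phi)\cap\re^n$ gives you neither LICQ nor a well-posed second-order condition. The paper avoids this entirely: when the equality constraints have finitely many real solutions, finite convergence of the hierarchy together with the flat truncation condition \reff{eq:flatrank} follows directly from \cite{lasserre2008semidef} (see also \cite{nie2013polynomial}), with no optimality conditions required and, notably, without even needing archimedeanness for that particular conclusion. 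Your parenthetical statement of the needed theorem is correct in substance, but it is attributed to the wrong results and propped up by an argument that does not hold; replace the optimality-conditions detour with a direct appeal to the zero-dimensional real variety result and the proof closes.
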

\begin{proof}
(i) If \reff{eq:genopt} is infeasible,  the constant polynomial $-1$
can be viewed as a positive polynomial on the feasible set of \reff{eq:genopt}.
Since $\idl[\Phi]+\qmod[\Psi]$ is archimedean,
we have $-1 \in  \idl[\Phi]_{2k}+\qmod[\Psi]_{2k}$,
for $k$ big enough, by the Putinar Positivstellensatz \cite{putinar1993positive}.
For such a big $k$, the SOS relaxation \reff{eq:d-sos} is unbounded from above,
hence the moment relaxation \reff{eq:d-mom} must be infeasible.

(ii) When the problem \reff{eq:genopt} is feasible and $\Theta$ is generic positive definite,
it must have a unique minimizer, by \cite[Proposition~5.2]{nie2014moment}.
Let $u$ be this minimizer.
The convergence of $ u^{(k)}$ to $u$ is shown in
\cite{Swg05} or \cite[Theorem~3.3]{nie2013certifying}.
For the special case that $\Phi(x)=0$ has finitely many real solutions,
the point $ u^{(k)}$ must be equal to $u$ and (\ref{eq:flatrank}) holds,
when $k$ is large enough.
This is shown in \cite{lasserre2008semidef} (see also \cite{nie2013polynomial}).
\qed\end{proof}
\begin{rmk}
(i) By Theorem~\ref{thmcvg:upconv},
when $\idl[\Phi]+\qmod[\Psi] \subseteq \re[x]$ is archimedean,
$u^{(k)}$ converges to the minimizer of (\ref{eq:genopt}).
If the point $ u^{(k)}$ is feasible and $\vartheta_k=\theta( u^{(k)})$,
then $ u^{(k)}$ must be a minimizer of (\ref{eq:genopt}),
regardless of the archimedeaness holds or not.
Moreover, the infeasibility of (\ref{eq:d-mom}) always implies the infeasibility of (\ref{eq:genopt}),
regardless of the archimedeaness holds or not.

(ii) We would like to remark that if flat truncation (\ref{eq:flatrank}) is satisfied,
then \reff{eq:uniquemin} holds and $\rank\,M_t[y^*]=1$.
However, it is possible that the $u^{(k)}$ is a minimizer for (\ref{eq:genopt}), while (\ref{eq:flatrank}) does not hold (see \cite{nie2013polynomial}).
When $\Phi(x)=0$ has finitely many real solutions (even when archimedeaness does not hold),
flat truncation holds for all $k$ that is big enough with $\rank\,M_t[y^*]=1$ \cite{lasserre2008semidef}.
In the actual implementation of solving (\ref{eq:genopt}), we usually skip Step 3 in Algorithm~\ref{ag:momwithFT}.

(iii) By Theorem~\ref{tm:solfinite}, the $\Phi(x)=0$ defines a finite set in $\re^n$ when the $\VIXF$ is given by general polynomials.
For such cases, finite convergence of Algorithm~\ref{ag:momwithFT} is guaranteed.
Moreover, in our computational practice,
Algorithm~\ref{ag:momwithFT} always has finite convergence.
\end{rmk}

The polynomial optimization subproblem (\ref{eq:moreopt}) can be solved in the same
way by the Moment-SOS hierarchy like (\ref{eq:d-mom})-(\ref{eq:d-sos}). The
convergence property is the same. For the cleanness of the paper, we omit the details.

\subsection{Verifying solution candidates for polynomial VIPs}
\label{sc:popverify}
In this subsection,
we study how to solve the polynomial subproblem (\ref{eq:linopt}) for verifying whether a given solution candidate $u$ solves $\VIXF$ or not.
%In particular, we are mainly interested in the case that  (\ref{eq:linopt}) has at least one minimizer.
When $X$ is compact, problem (\ref{eq:linopt})  exists at least one minimizer.
Moreover, if (\ref{eq:linopt}) is unbounded from below,
then $u$ cannot be a solution.
In these cases, it suffices to choose an appropriate positive $R$ such that (\ref{eq:linopt}) with the extra boundedness constraint $\Vert x-u\Vert^2\le R$ has a negative minimum. For the rest of this subsection, we suppose (\ref{eq:linopt}) has at least one minimizer.

Under the assumption that $X$ is given by (\ref{eq:X})
with nonsingular tuple $g$, the LICQ holds for every $x\in X$.
Thus, if $v\in X$ is a minimizer of (\ref{eq:linopt}),
then it must be a KKT point of (\ref{eq:linopt}).
Introducing dual variables $\lmd(x)$, problem (\ref{eq:linopt}) can be equivalently reformulated to (\ref{eq:pop}), with
\be\label{eq:tpp3}
\left\{
\begin{aligned}
\ \theta(x)\,&\defeq\ (x-u)^T\mbF(u), \\
%%%%
\,\ \Phi(x)\,&\defeq\  \{\mbF(u) - \sum\nolimits_{i=1}^{m} \lmd_i(x) \nabla g_i(x) \}\\
&\qquad\qquad\cup\{\lmd_i(x) g_i(x)\,:\, (i\in\mc{I})\}\cup\{g_i(x)\,:\, (i\in\mc{E})\}, \\
%%%%
\,\ \Psi(x)\,&\defeq\ \{g_i(x)\, (i\in\mc{I})\}\cup\{ \lmd_i(x)\, (i\in\mc{I})\},
\end{aligned}\right.
\ee
Therefore, Algorithm~\ref{ag:momwithFT} can be applied to solve  (\ref{eq:linopt}).
Note that under our assumption,  (\ref{eq:linopt}) must be feasible,
so there is no need to check the feasibility in Step~1.
The following convergence result for Algorithm~\ref{ag:momwithFT} is straightforward by \cite[Theorems~3.3]{Nie2019} and \cite[Theorems~4.4]{Nie2018saddle}:
\begin{thm}
Suppose (\ref{eq:linopt}) has a minimizer.
Let $\theta, \Phi, \Psi$ be given as in (\ref{eq:tpp3}),
and let (\ref{eq:pop}) be the polynomial optimization reformulation for (\ref{eq:linopt}).
If either one of the following holds:
\begin{enumerate}[(i)]
    \item $\idl[\Phi]+\qmod[\Psi]$ is archemedean; or
    \item $\idl[(g_i)_{i\in\mc{E}}]+\qmod[(g_i)_{i\in\mc{I}}]$ is archemedean; or
    \item The $q(x)=0\, (q\in\Phi)$ has finitely many real solutions;
\end{enumerate}
then Algorithm~\ref{ag:momwithFT} has finite convergence.
Moreover, if we further assume that every minimizer of (\ref{eq:pop}) is an isolated critical point of (\ref{eq:linopt}),
then (\ref{eq:flatrank}) holds at each minimizer of (\ref{eq:pop}) for all $k$ that is big enough.
\end{thm}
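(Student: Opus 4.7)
The plan is to recognize the theorem as a direct application of the finite-convergence theory for Moment-SOS hierarchies with Lagrange multiplier expressions, developed in \cite[Theorem~3.3]{Nie2019} and \cite[Theorem~4.4]{Nie2018saddle}. First, I would note that the reformulation (\ref{eq:tpp3}) encodes the KKT system of (\ref{eq:linopt}) as polynomial equalities and inequalities in $x$ alone, by substituting each dual variable $\lmd_i$ with the polynomial LME $\lmd_i(x)$. Since the tuple $g$ is nonsingular, LICQ holds throughout $X$, so every minimizer of (\ref{eq:linopt}) is a KKT point, hence feasible for (\ref{eq:pop}); together with the assumed existence of a minimizer, this implies the two problems share the same optimal value $\vartheta$.

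I would then treat the three sufficient conditions in turn. Under (i), archimedeanness of $\idl[\Phi]+\qmod[\Psi]$ is precisely the Putinar hypothesis for the reformulation, so the standard moment-SOS duality argument yields $\gm_k=\vartheta_k=\vartheta$ for all sufficiently large $k$. Under (ii), the set $X=\mc{Z}((g_i)_{i\in\mc{E}})\cap\mc{S}((g_i)_{i\in\mc{I}})$ is compact, so the feasible set of (\ref{eq:pop}) is bounded; one can then adjoin a ball constraint $R-\|x\|^2\ge 0$ to $\Psi$ to make the enlarged quadratic module archimedean without changing the feasible set, reducing to case (i). Under (iii), the finiteness of $\mc{Z}(\Phi)$ delivers finite convergence by Lasserre's theorem on polynomial optimization over a finite real variety~\cite{lasserre2008semidef}. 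For the flat-truncation assertion, once $\vartheta_k=\vartheta$ is attained, any accumulation point of the optimal moment vectors $y^{(k)}$ is (by Curto--Fialkow truncated moment theory) generated by a probability measure supported on the optimal set of (\ref{eq:pop}); when every minimizer is an isolated critical point of (\ref{eq:linopt}), this set is finite, the measure is a finite combination of Dirac atoms, and its truncated moment matrices stabilize in rank for large $t$, which is exactly condition (\ref{eq:flatrank}).

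The main obstacle is case (ii): the archimedean hypothesis there is placed on the original tuple describing $X$, not on the augmented tuple $\Phi\cup\Psi$ arising in (\ref{eq:tpp3}), which additionally carries the polynomials $\lmd_i(x)$ and the products $\lmd_i(x)g_i(x)$ whose growth is not \emph{a priori} controlled. To safely reduce (ii) to (i), one needs the LME identity $\lmd(x)=L(x)\hat{\mbF}(x)$ together with the compactness of $X$ to bound $\lmd_i(x)$ on the feasible set of (\ref{eq:pop}), so that adjoining a ball constraint in $x$ genuinely upgrades the enlarged quadratic module to an archimedean one while leaving the feasible set untouched. Once this technical point is settled, the three cases all funnel into the same finite-convergence conclusion, and the flat-truncation consequence follows from the atomic structure of the optimal moment measure.
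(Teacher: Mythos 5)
Your overall route coincides with the paper's: the paper proves this theorem by direct appeal to \cite[Theorem~3.3]{Nie2019} and \cite[Theorem~4.4]{Nie2018saddle}, which is exactly how you open, and the reduction of (\ref{eq:linopt}) to (\ref{eq:pop}) via the LMEs is set up correctly. However, two steps in your elaboration would fail if you tried to make the argument self-contained. Under (i) you assert that archimedeanness of $\idl[\Phi]+\qmod[\Psi]$ plus ``the standard moment-SOS duality argument'' yields $\gamma_k=\vartheta_k=\vartheta$ for large $k$. Putinar's Positivstellensatz certifies only \emph{strictly positive} polynomials, whereas $\theta-\vartheta$ vanishes at every minimizer; archimedeanness alone therefore gives asymptotic convergence, not finite convergence. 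The finite convergence is precisely the nontrivial content of \cite[Theorem~3.3]{Nie2019}, whose proof exploits the Lagrange-multiplier (KKT) structure of $\Phi$ to obtain a certificate for the merely nonnegative polynomial $\theta-\vartheta$; it is not a duality argument.

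Second, the ``main obstacle'' you identify in case (ii) is not an obstacle. Since $\{g_i\}_{i\in\mc{E}}\subseteq\Phi$ and $\{g_i\}_{i\in\mc{I}}\subseteq\Psi$ by (\ref{eq:tpp3}), one has
\[
\idl[(g_i)_{i\in\mc{E}}]+\qmod[(g_i)_{i\in\mc{I}}]\ \subseteq\ \idl[\Phi]+\qmod[\Psi],
\]
and archimedeanness only requires \emph{some} $\rho$ in the set with $\mathcal{S}(\rho)$ compact; any such $\rho$ in the smaller set already lies in the larger one. Adding generators to a quadratic module can never destroy archimedeanness, so (ii) implies (i) immediately, with no need to bound $\lmd_i(x)$ via the LME identity or to control the growth of $\lmd_i(x)g_i(x)$. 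Your proposed workaround of adjoining a ball constraint $R-\|x\|^2\ge0$ is not only unnecessary but would alter the relaxation (\ref{eq:d-mom}) whose finite convergence the theorem is actually asserting. The remaining parts of your sketch --- case (iii) via \cite{lasserre2008semidef} and the flat-truncation claim via the atomic structure of the optimal moment measure when the minimizers are isolated critical points --- are consistent with the cited results.
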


\subsection{Solving the maximization problem (\ref{eq:max})}
\label{sc:solvewithFT}
For the maximization problem (\ref{eq:max}) in Algorithm \ref{ag:more},
we let
\be\label{eq:tpp1}
\left\{
\begin{array}{c}
\quad \theta(x)\defeq -[x]_1^T\Theta[x]_1, \quad
\Phi \defeq  E ,\\
\Psi \defeq  I \cup \{ (v-x)^T\mbF(x)\, :\, v\in V\}\cup \{ \Theta(x^*)+\delta-[x]_1^T\Theta[x]_1 \}.
\end{array}\right.
\ee
Then the maximization problem (\ref{eq:max}) can be equivalently rewritten as the polynomial minimization problem (\ref{eq:pop}),
and their $k$th order Moment-SOS relaxations are given by (\ref{eq:d-mom}-\ref{eq:d-sos}).
Algorithm~\ref{ag:momwithFT} can be applied to implement the Moment-SOS hierarchy to solve (\ref{eq:max}).

For (\ref{eq:max}), its feasible set is always nonempty since $x^*$ is a feasible point,
so there is no need to check its feasibility.
Moreover, if the optimal value of (\ref{eq:max}) is nonnegative,
then the $\delta$ which defines (\ref{eq:max}) is applicable for (\ref{eq:moreopt}).
For the $k$th ordered moment relaxation (\ref{eq:d-mom}) of (\ref{eq:max}), if its minimum value $\vartheta_k$ satisfies $\vartheta_k\ge \Theta(x^*)$,
then the optimal value of (\ref{eq:max}) is nonnegative and we may terminate Algorithm~\ref{ag:momwithFT} directly.
This is because the $\vartheta_k$ provides a lower bound for the maximum of (\ref{eq:max}).
However, if $\vartheta_k<0$, then we still need \reff{eq:flatrank} to hold for the guaranteed optimality.
Moreover, the archimedeanness is always satisfied by (\ref{eq:max}),
since it has the inequality constraint
$ [x]_1^T \Theta [x]_1 \ge [x^*]_1^T \Theta [x^*]_1 + \dt$.

The convergence of Algorithm~\ref{ag:momwithFT} is as follows.
If (\ref{eq:d-mom}) is infeasible,
then (\ref{eq:pop}) has an empty feasible set.
This is because the $[x]_d$ is feasible for (\ref{eq:d-mom}) for every feasible point $x$ of (\ref{eq:pop}).
Suppose (\ref{eq:pop}) has a nonempty feasible set and $\idl[\Phi]+\qmod[\Psi]$ is archimedean.
Then $\vartheta_k \to \vartheta_{\min}$ as $k \to \infty$ \cite{Las01}.
Furthermore, under some classical optimality conditions,
the finite convergence is guaranteed \cite{nie2014optimality}.
Lastly, if $\Phi=0$ has finitely many real solutions, then Algorithm~\ref{ag:momwithFT} has finite convergence, even if the archimedeanness is not satisfied.
Note that by Theorem~\ref{tm:solfinite}, when the $\VIXF$ is given by general polynomials,
the $\Phi=0$ has at most finitely many complex solutions.

\section{Numerical Experiments}
\label{sc:ne}
In this section, we apply our methods to solve polynomial VIPs.
We apply the software {\tt GloptiPoly~3} \cite{GloPol3}
and {\tt Mosek} \cite{mosek} to implement Moment-SOS relaxations for solving polynomial optimization problems.
The computation is implemented in a Dell OptiPlex 7090 desktop,
with an Intel\textsuperscript \textregistered \
Core(TM) i9-10900 CPU at 2.80GHz$\times $10 cores and 64GB of RAM,
in a Windows 10 operating system.
For the neatness of the paper, only four decimal digits are shown
for computational results.

\subsection{Explicit polynomial VIPs}
\label{sc:epVI}
First, we apply Algorithm~\ref{ag:KKTSDP} and Algorithm~\ref{ag:more} to solve polynomial VIPs.
In Step~3 of Algorithm~\ref{ag:KKTSDP},
if the optimal value $\varepsilon = 0$,
then $u^{(k)}$ is a solution to the $\VIXF$.
In numerical computations,
we may not have $\varepsilon= 0$ exactly due to round-off errors.
Typically,
we accept the computed candidate as a solution when $\vert \varepsilon \vert \le 10^{-6}$.
Similarly, in Step~2 of Algorithm~\ref{ag:more},
if the optimal value $\gamma^*$ satisfying $\vert\gm^*-[x^*]_1^T\Theta[x^*]_1\vert\le 10^{-6}$,
then we regard the value $\delta$ being applicable for (\ref{eq:moreopt}).
Unless specifically mentioned, we compute a complete set of solutions if it is nonempty, for every example in this subsection.

\begin{exm}\rm \label{ep:Kojima}
Consider the nonlinear complementarity problem (NCP) in \cite{Pang1993}: % given by
\[0\leq \mbF(x):=\lvt\begin{array}{c}
3x_1^2+2x_1x_2+2x_2^2+x_3+3x_4-6\\
2x_1^2+x_1+x_2^2+10x_3+2x_4-2\\
3x_1^2+x_1x_2+2x_2^2+2x_3+9x_4-9\\
x_1^2+3x_2^2+2x_3+3x_4-3
\end{array} \rvt \; \perp \; x\geq 0 .\]
This NCP is equivalent to $\vi(\re^4_+,\mbF)$,
whose feasible set $X:=\re^4_+$.
There are two solutions $(\sqrt{6}/2,0,0,1/2),~ (1,0,3,0)$ to the NCP.
Let $F_i(x)$ be the $i$th entry of $\mbF(x)$.
Then, the LMEs are
\[\lmd_i=F_i(x) \fall i=1, 2, 3, 4.\]
We ran Algorithm~\ref{ag:more} and it took around 0.55 second to find solutions $u^{(1)}, u^{(2)}$ as follows:
\begin{align*}
u^{(1)}&=(1.2247, 0.0000, 0.0000, 0.5000),\quad \varepsilon=-1.5499\cdot 10^{-8};\\
u^{(2)}&=(1.0000, 0.0000, 3.0000, 0.0000),\quad \varepsilon=-3.7262\cdot 10^{-8}.
\end{align*}
%It took around 0.55 second to find these two solutions.

We also tested solving the KKT system (\ref{eq:viKKT}) without LMEs. Treating $(\lmd_1\ddd \lmd_4)$ as additional variables, we formulated the polynomial optimization problem similar to (\ref{eq:KKTopt}) with a feasible set given by (\ref{eq:viKKT}).
Then, we found solutions to $\vi(\re^4_+,\mbF)$ under the same framework as in Algorithms~\ref{ag:KKTSDP} and \ref{ag:more}.
We presented the comparison of the time consumption for approaches with or without LMEs in Table~\ref{tab:tb1}.
In the table,
the first and the second row show the time consumption for finding the first and second solution, respectively,
and the last row is for checking the completeness of the solution set $\{u^{(1)}, u^{(2)}\}$.
It should be pointed out that the time consumption of solving (\ref{eq:max}) and (\ref{eq:moreopt}) for finding each solution or checking the completeness is included.
\begin{table}[htb]
\renewcommand{\arraystretch}{1}
% \centering
\begin{tabular}{c|c|c}  \hline
                     & \multicolumn{1}{c|}{with LMEs} & \multicolumn{1}{c}{without LMEs}\\ \hline
 the first solution  & 0.12s &  2.97s\\
 the second solution & 0.20s &  8.49s\\
 the completeness of $\{u^{(1)}, u^{(2)}\}$ & 0.23s & 10.23s\\ \hline
\end{tabular}
\caption{Computational results for Example~\ref{ep:Kojima}}
\label{tab:tb1}
\end{table}
\end{exm}

\begin{exm}\rm \label{ep:Kojima_nosol}
Consider the $\VIXF$ with
\[\mbF(x)=\lvt\begin{array}{c}
-x_1+4x_1x_2+x_2^2+x_3-x_4+1\\
2x_1^2+x_1-x_2^3-10x_3+2x_4\\
3x_1^3+x_1x_2+2x_2^2-2x_3+9x_4\\
x_1^2-3x_2^2+2x_3-3x_4-4
\end{array}\rvt,\quad X=\{x\in\re^4_+: x_1x_2x_3x_4=2\}.\]
In this example, the set $X$ is unbounded and nonconvex.
Let $F_i(x)$ be the $i$th entry of $\mbF(x)$.
Then, the LMEs are
\[\lambda_0=\frac{x^T\mbF(x)}{-8},\quad \lmd_i=F_i(x)+\lmd_0~(i=1,2,3,4).\]
We ran Algorithm~\ref{ag:KKTSDP} and the infeasibility of (\ref{eq:KKTopt}) was detected at the $3$rd ordered moment relaxation,
hence this VIP has no solution.
It took around 0.31 seconds.

Similarly to Example 5.1, we tested solving the KKT system (\ref{eq:viKKT}) without LMEs under the framework of Algorithms~\ref{ag:KKTSDP}.
For this VIP,
since we only need to check the infeasibility of (\ref{eq:KKTopt}) to detect the nonexistence of solutions,
we presented the comparison for solving moment relaxations (\ref{eq:d-mom}) with various relaxation orders $d$.
In Table~\ref{tab:tb2},
the first column is the relaxation order;
and the second and third columns represent the time consumption and whether the $d$th order moment relaxation successfully detects the nonexistence of solutions or not.
When LMEs are applied,
the $2$nd order moment relaxation is not well-defined, and the $4$th order moment relaxation is unnecessary for checking the emptiness of $\SOLXF$,
since the $3$rd order moment relaxation is already infeasible.
\begin{table}[htb]
\renewcommand{\arraystretch}{1}
% \centering
\begin{tabular}{c|c|c|c|c}  \hline
 \multirow{2}{*}{$d$}& \multicolumn{2}{c|}{with LMEs} & \multicolumn{2}{c}{without LMEs}\\ \cline{2-5}
    & time & nonexistence of solutions & time & nonexistence of solutions\\ \hline
  2 & n.a.  & n.a. & 4.52s  & not detected\\
  3 & 0.31s & detected & 32.45s & not detected\\
  4 & 1.28s & detected & 2031.27s & detected\\\hline
\end{tabular}
\caption{Computational results for Example~\ref{ep:Kojima_nosol}}
\label{tab:tb2}
\end{table}
\end{exm}

\begin{exm}\rm \label{ep:GNEP}
Consider the two-player jointly convex generalized Nash equilibrium
problem in \cite{Nie2021convex}:
\[
\mbox{1st player:}
\left\{\begin{array}{ll}
\min\limits_{x_1\in\re^3} & 10x_1^Tx_2-\sum_{j=1}^3x_{1,j}\\
\st&x_1^Tx_1+x_2^Tx_2\le1;
\end{array}\right.
\]
\[
\mbox{2nd player:}
\left\{\begin{array}{ll}
\min\limits_{x_2\in\re^3} & \sum_{j=1}^3(x_{1,j}x_{2,j})^2+(3\prod_{j=1}^3x_{1,j}-1)\sum_{j=1}^3x_{2,j}\\
\st&x_1^Tx_1+x_2^Tx_2\le1,
\end{array}\right.
\]
where $x_1:=(x_{1,1}, x_{1,2}, x_{1,3})$ and $x_2:=(x_{2,1}, x_{2,2},  x_{2,3})$ are decision variables of the first and the second player respectively.
Let $x:=(x_1,x_2)$ and denote the first (resp., second) player's objective functions by $f_1(x)$ (resp., $f_2(x)$).
Then, finding the {\it normalized equilibria} for this GNEP is equivalent to solving $\VIXF$ with
\[\mbF:=\left(\frac{\pt f_1}{\pt x_{1,1}}, \frac{\pt f_1}{\pt x_{1,2}}, \frac{\pt f_1}{\pt x_{1,3}}, \frac{\pt f_2}{\pt x_{2,1}}, \frac{\pt f_2}{\pt x_{2,2}}, \frac{\pt f_2}{\pt x_{2,3}}\right)^T,\]
\[X:=\{x\in\re^4: 1-x^Tx\ge0\}.\]
We refer to \cite{Facchinei2007book,Nie2021convex} for more details about GNEPs.
One can check that the map $\mbF$ is nonmonotone on $X$. For this polynomial VIP, the LME is
\[\lmd = -\frac{x^TF(x)}{2}.\]
We ran Algorithm~\ref{ag:more} and obtained the unique solution $u:=(u_1,u_2)$ with
\[
\begin{array}{l}
u_1=(-0.4934, -0.4934, -0.4934),\ u_2=(0.2998, 0.2998, 0.2998),
\end{array}
\]
and $\varepsilon=-2.5461\cdot 10^{-8}$.
It took around 4.21 seconds.
\end{exm}

\begin{exm}\rm \label{ep:rings}
(i) Consider the $\VIXF$ with
\be\label{eq:nonconvexFg}
\mbF(x)=\lvt\begin{array}{c}
x_1+x_2+x_3+x_4\\
x_1-x_2^2+x_3-x_4\\
-x_3-x_1x_2\\
x_4-x_1x_2
\end{array}\rvt,
\quad X=\left\{x\in\re^4:
x^Tx-1\ge0,\
2-x^Tx\ge0 \right\},
\ee
where $\mbF(x)$ is nonmonotone and $X$ is nonconvex.
For this problem, the LMEs are
\[\lmd_1=(2-x^Tx)\cdot\frac{x^TF(x)}{2},\quad
\lmd_2=(1-x^Tx)\cdot\frac{x^TF(x)}{4}.\]
We ran Algorithm~\ref{ag:more} and obtained all solutions to the $\VIXF$,
which are
\[
\begin{array}{ll}
u=(-0.2639,1.3073,-0.4537,-0.1250),&\mbox{ with } \quad \varepsilon= -2.7460\cdot10^{-9};\\
u=(0.4365,-1.0536 ,0.7694,-0.3279),&\mbox{ with } \quad \varepsilon= -7.2553\cdot10^{-9};\\
u=(-0.4108,-0.4710,1.2655,0.0899), &\mbox{ with } \quad \varepsilon= -1.1356\cdot10^{-8}; \\
u=(-0.8126,0.7417 ,0.7227,-0.5169),&\mbox{ with } \quad \varepsilon= -9.7235\cdot10^{-9}.\\
\end{array}
\]
It took around 9.56 seconds.

\noindent(ii) In (\ref{eq:nonconvexFg}),
if we change $\mbF(x)$ to
$\lvt\begin{array}{c}
-x_1-x_2-x_3-x_4\\
x_1-x_2+x_3-x_4\\
x_3-x_1x_2\\
x_4-x_1x_2
\end{array}\rvt$,
% (\ref{eq:nonconvexFg}) to $-x_1-x_2-x_3-x_4$,
then Algorithm~\ref{ag:KKTSDP} took around 1.52 seconds to detect the emptiness of the $\SOLXF$.
\end{exm}

\begin{exm}\rm \label{ep:cons_eig}
Let $A, B\in\re^{n\times n}$ be a pair of matrices such that $B$ is symmetric positive definite,
and let $C$ be a closed convex cone in $\re^n$.
Consider the reformulation $\VIXF$ for the {\it constrained eigenvalue problem} (see \cite{Facchinei2007book}) of $A$ over the cone $C$, where
\be\label{eq:cons_eig} \mbF(x):= Ax,\quad X:=\{x\in C: x^TBx=1\}.\ee
% {\rb (Are they equivalent? I guess no...)}
(i) Suppose
\be\label{eq:linearcone}
C:=\{x\in\re^n_+: x_1\ge x_2+\cdots+x_n\}.\ee
Then $X=\{x\in\re^n: g_0(x)=0,\ g_1(x)\ge0,\, \ldots,\, g_n(x)\ge0\}$, where
\[g_0(x)=x^TBx-1,\ g_1(x)= x_1- x_2-\cdots-x_n,\ g_{i}=x_i \,(i=2\ddd n). \]
The LMEs for $\VIXF$ are
\[\begin{array}{c}
\lambda_0 = \frac{x^TF(x)}{2},\ \lambda_1=F_1(x)-2\lmd_0\cdot \sum_{j=1}^n B_{1,j}x_j,\\
\lmd_i=F_i(x)-2\lmd_0\cdot \sum_{j=1}^n B_{i,j}x_j+\lmd_1\,(i=2\ddd n),\end{array}\]
where $F_i(x)$ is the $i$th entry of $\mbF(x)$ and $B_{i,j}$ means the element in the $i$th row and $j$th column of $B$.
If we let
\be\label{eq:AIn2}
A=\lvt\begin{array}{rrrr}
    -8  &  -4  &   8  &  -6\\
    -8  &  -4  &   4  &  -9\\
    -7  &  -6  &   1  &   9\\
    -6  &  -5  &  -7  &   4
\end{array}
\rvt,\qquad
B=\lvt\begin{array}{rrrr}
     4  &   0  &   3  &  -1\\
     0  &   4  &  -1  &  -2\\
     3  &  -1  &   4  &   0\\
    -1  &  -2  &   0  &   2
\end{array}
\rvt,\ee
then we obtained the unique solution $u$ to $\VIXF$ by Algorithm~\ref{ag:more} with
\[\begin{array}{c}
u=(0.5534, 0.2372, 0.0000, 0.3162),\quad \varepsilon=-1.3994\cdot10^{-8}.\\
\end{array}\]
It took around 0.50 seconds.

\noindent
(ii) Suppose $C$ is the second order cone, i.e.,
\be\label{eq:soc}
C:=\{x\in\re^n: \Vert \bar{x}\Vert_2\le x_n\},\quad \bar{x}=(x_1,x_2\ddd x_{n-1}).\ee
Then $X=\{x\in\re^n: g_0(x)=0,\ g_1(x)\ge0\}$, where $g_0(x)=x^TBx-1,\  g_1(x)=x_n^2-\bar{x}^T\bar{x}.$
We remark that in this case,
polynomial LMEs do not exist since the constraints are not nonsingular.
However, there exist rational Lagrange multiplier expressions
\[\lmd_0=\frac{x^T\mbF(x)}{2},\quad \lmd_1=\frac{F_n(x)-2\lmd_0 \sum_{i=1}^nB_{n,i}x_i}{2x_n}.\]
One may check that the denominator $x_n$ is strictly positive over $X$,
thus we may cancel it in the KKT system.
We refer to \cite{Nie2021convex} for more details about rational LMEs and how to reformulate the KKT system with them.
We ran Algorithm~\ref{ag:more} with matrices $A$ and $B$ given as in (\ref{eq:AIn2}).
It took around 4.24 seconds and find the unique solution
\[u=(0.6906,0.5866,-0.3661,0.9773), \quad \varepsilon=-5.8198\cdot 10^{-7}.\]
\end{exm}

\begin{exm}\rm
\label{ep:inv_cap}
Consider the invariant capital stock problems \cite{Jones1982}.
Let $A\in\re^{n_2\times n_1}$, $B\in\re_+^{n_2\times n_1}$,
$b\in\re^{n_2}$,
$\rho\in(0,1)$ be the discount rate,
and let $f(x)$ be a convex loss function.
For a given vector $b_0\in\re^{n_2}$,
we denote by $P(b_0)$ the problem of finding a sequence of activity levels $\{x^{(t)}\}_{t=1}^{\infty}$ to solve
\be
\label{eq:Pb0}
\left\{
\begin{array}{cl}
\min\limits_{x^{(1)},x^{(2)},x^{(3)},\ldots} &  \sum_{t=1}^{\infty}\rho^{t-1}f(x^{(t)})\\
\st & Ax^{(1)}\le b_0+b,\\
    & Ax^{(t)}\le Bx^{(t-1)}+b, \quad t=2,3,\ldots\\
    & x^{(t)}\in \re^{n_1}_+.\quad t=1,2,\ldots
\end{array}\right.
\ee
Then, the invariant capital stock problem is to find $u\in\re^n$ such that $\{x^{(t)}\}_{t=1}^{\infty}$ with every $x^{(t)}=u$ solves $P(Bu)$.
By \cite[Proposition~1.4.5]{Facchinei2007book},
if there exists $v\in\re^m$ such that $(u,v)$ solves $\mbox{VI}(\re^{n_1+n_2}_+,\mbF)$,
where
\be\label{eq:vicapital}
\mbF(x,y)=
\left[\begin{array}{c}
\nabla f(x) + (A^T-\rho B^T)y\\
b+(B-A)x
\end{array}
\right],\ee
then $u$ is an invariant capital stock.

We ran Algorithm~\ref{ag:more} with $n_1=4$, $n_2=3$, $\rho=0.7$, and
% \be\label{eq:deg4convex}
\begin{align*}
&f(x)=\sum_{1\le i\le j\le 4}x_i^2x_j^2+2x_1^3x_2+2x_1x_2^3+x_3^3+x_4^3-\sum_{i=1}^4 x_i,\\
&A=\left[\begin{array}{rrrr}
    -3  &   1   &  1   & -3 \\
     1  &  -1   &  3   &  1 \\
     1  &   0   & -3   &  2
\end{array}\right],\quad
B=\left[\begin{array}{cccc}
     5  &   4   &  1   &  1 \\
     0  &   2   &  0   &  5 \\
     2  &   5   &  4   &  4
\end{array}\right],\quad
b=\left[\begin{array}{r}
     1 \\
    -3 \\
     2\end{array}\right].\end{align*}
We obtained the unique solution
\[u=(0.1861,0.5845,0.1715,0.4868),\quad
  v=(-0.0000,0.2270,0.0000).\]
The error $\varepsilon= -7.3551\cdot 10^{-9}$.
It took around 5.54 seconds.
\end{exm}

\noindent\textbf{Comparison with {\tt the PATH Solver}.}
There exist many numerical algorithms for solving VIPs.
Since there is no monotonicity or convexity assumption in general for the polynomial VIPs in Section~\ref{sc:epVI},
we only compared Algorithm~\ref{ag:KKTSDP} with the damped Newton's method for solving the MiCP (the KKT system (\ref{eq:viKKT})) via the path search \cite{df1995,r1994},
which is implemented with the well-developed {\tt Matlab} software: {\tt the PATH Solver} \cite{PATH}.
We remark that we also tried some other numerical methods introduced in \cite{Facchinei2007book},
such as the hyperplane-projection method, interior point method, etc.
However, they usually cannot solve the VIPs in Section~\ref{sc:epVI}, especially when the problem is nonmonotone or nonconvex.
So we omit the numerical results of these methods for the cleanness of this paper.
We used the default parameter setting, which can be found in \cite[Table 6\&7]{PATH}.
Feasible initial points are applied for these methods,
that we use $(2,1,1,1)$ for Example~\ref{ep:Kojima_nosol},
the $(1,0,1,0)$ for Examples~\ref{ep:rings}(i-ii),
the $0.5e_1$ for Example~\ref{ep:cons_eig}(i),
and the zero vector for other problems.
Besides that, we use the zero vector as the initial point corresponding to Lagrange multipliers.

Numerical results are presented in Table~\ref{tab:comparison}.
The column `time' is the consumed time in seconds for solving the $\VIXF$ and verifying if a solution is obtained,
and the column `error' stands for the minimum of (\ref{eq:linopt}) at the computed solution.
Here, we only compute one solution for each problem, hence the time consumption in rows `Alg.~\ref{ag:KKTSDP}' may be much smaller than that in the last subsection, since we do not find more solutions or check the completeness of the solution sets.
\begin{table}[htb]
% \centering
\renewcommand{\arraystretch}{1.25}
\begin{tabular}{c||rlrlrlrl}
\hline
\multirow{2}{*}{Example} & \multicolumn{2}{c}{\tt{PATH}} & \multicolumn{2}{c}{Alg.~\ref{ag:KKTSDP}} \\\cline{2-5}
    & time & error & time & error  \\\hline
5.1 & 0.48 & $2\cdot 10^{-7}$ & 0.12 & $1\cdot 10^{-8}$ \\
5.2 & \multicolumn{2}{c}{fail} & \multicolumn{2}{c}{no solution} \\
5.3 & 0.79 & $6\cdot 10^{-9}$  & 1.54 & $2\cdot 10^{-8}$ \\
5.4(i) & 0.40 & 2.98 & 0.46 & $2\cdot 10^{-9}$ \\
5.4(ii) & 0.13 & 2.41 & \multicolumn{2}{c}{no solution} \\
5.5(i) & \multicolumn{2}{c}{fail} & 0.14 & $1\cdot 10^{-8}$ \\
5.5(ii) & \multicolumn{2}{c}{fail} & 1.51 & $6\cdot 10^{-7}$ \\
5.6 & 0.46 & $8\cdot 10^{-9}$ & 5.54 & $8\cdot 10^{-9}$ \\\hline
\end{tabular}
\caption{Comparison with {\tt PATH}}\label{tab:comparison}
\end{table}

The observations are summarized as follows:
\begin{itemize}
  \item The software {\tt PATH} failed to converge for Example~\ref{ep:Kojima_nosol} since the set of KKT points is empty.
  For Example~\ref{ep:cons_eig}(i), the {\tt PATH} solver fails to solve the problem, because the Jacobian is singular at the solution.
  Besides that,
  {\tt PATH} found the KKT point $(0.2301, -0.6948, 0.0715, 0.6776)$ for Example~\ref{ep:rings}(i),
  and found the KKT point $(0.5345, 0.0000, -0.8018, -0.2673)$ for Example~\ref{ep:rings}(ii).
  However, these KKT points obtained by {\tt the PATH solver} are not solutions to the $\VIXF$, since the feasible sets in these two examples are nonconvex.
  \item The damped Newton's method may be sensitive to the choice of initial points.
  For example, if we use $e_1$ as the initial point for Example~\ref{ep:GNEP},
  then {\tt the PATH Solver} do not converge.
  In contrast, Algorithm~\ref{ag:KKTSDP} can efficiently find solutions for all examples in Section~\ref{sc:epVI} if there exists any, or certify the emptiness of the $\SOLXF$.
\end{itemize}

\subsection{Randomly generated polynomial VIPs}
In this subsection, we tested our method on VIPs that are given by polynomials with randomly generated coefficients.
For all randomly generated VIPs, we only ran Algorithm~\ref{ag:KKTSDP}, i.e., the algorithm for finding one solution, and we do not check the uniqueness of the solution.

For all instances in the following,
we randomly generated $100$ polynomial VIPs and solved them by Algorithm~\ref{ag:KKTSDP}.
In the experiment,
a computed point is regarded as a solution to the VIP if the error $\vert \varepsilon \vert \le 10^{-6}$.
We allow $10$ maximum iterations (i.e., $\ell=10$) in Algorithm~\ref{ag:KKTSDP} for each VIP.
We remark that for VIPs with convex feasible sets,
Algorithm~\ref{ag:KKTSDP} guarantees to find a solution to the VIP at the initial loop (see Proposition~\ref{prop:cvx}). However, the computed solution may not satisfy $\vert \varepsilon \vert \le 10^{-6}$ due to the numerical error.
For such cases, we proceed to new loops of Algorithm~\ref{ag:KKTSDP} until a solution satisfying $\vert \varepsilon \vert \le 10^{-6}$ is computed or the maximum $\ell$ is reached.
All polynomial optimization subproblems in Algorithm~\ref{ag:KKTSDP} are solved by Algorithm~\ref{ag:momwithFT} with the relaxation order $k=d_0,d_0+1\ddd d_0+4$.
If the $(d_0+4)$th Moment-SOS relaxation cannot find a solution for the polynomial optimization,
then we stop the algorithm and report a failure of solving this polynomial VIP.

% We tested our algorithm on the following VIs:
\subsubsection{Randomly generated ball constrained VIPs} Consider the $\vi(\mathbb{B}_n,\mbF)$ with $\mathbb{B}_n:=\{x\in\re^n: x^Tx\le 1\}$.
Since $\mathbb{B}_n$ is convex and compact, the $\mbox{SOL}(\mathbb{B}_n,\mbF)\ne \emptyset$.
For the given dimension $n$ and degree $d$, we randomly generate the matrix $A\in\re^{n\times N}$ with $N=\binom{n+d}{n}$ using the {\tt Matlab} function {\tt randn}, and let
% \be\label{eq:randF}
$\mbF(x)=A\cdot [x]_d.$
% \ee
We report the numerical results in Table~\ref{tab:ballVI}.
In the table, $(n,d)$ is the pair of the dimension of $x$ and the degree of every $F_i(x)$,
`SR' (success rate) is the rate of finding a solution for the VIP successfully,
and `time' is the average consumed time (in seconds) of finding solutions.
\begin{table}[htb]
\begin{tabular}{c||ccccccccccccc}
   \hline
   $(n,d)$  & (4,2)  & (4,3) & (4,4) & (4,5) & (4,6) & (4,7) & (4,8) & (4,9) \\ \hline
   SR & 100\% & 100\% & 100\% & 100\% & 100\% & 100\% & 100\% & 100\% \\
   time & 0.25s & 0.28s & 0.34s & 0.67s & 1.03s & 2.51s & 3.95s & 8.34s  \\\hline
   $(n,d)$  & (4,10) & (5,2)  & (5,3) & (5,4) & (5,5) & (5,6) & (5,7) & (5,8) \\ \hline
   SR & 100\% & 100\% & 100\% & 99\% & 100\% & 100\% & 100\% & 100\% \\
   time & 20.49s & 0.28s & 0.43s & 1.22s & 3.82s & 9.26s & 24.63 & 70.79s  \\\hline
   $(n,d)$  & (6,2) & (6,3) & (6,4)  & (6,5) & (6,6) & (7,2) & (7,3) & (7,4) \\ \hline
   SR & 100\% & 100\% & 100\% & 99\% & 100\% & 100\% & 100\% & 100\% \\
   time & 0.92s & 1.52s & 7.43s & 35.38s & 99.51s & 1.58s & 9.79s & 41.18  \\\hline
   $(n,d)$  & (7,5) & (8,2) & (8,3) & (8,4)  & (9,2) & (9,3) & (10,2) & (11,2) \\ \hline
   SR & 100\% & 100\% & 100\% & 100\% & 100\% & 100\% & 99\% & 100\% & \\
   time & 365.49s & 4.60 & 43.98s & 213.74s & 12.44s & 188.38s & 47.22s &  121.70s\\\hline
   % $(n,d)$  & (2,2) & (2,3) & (2,4) & (3,2) \\ \hline
   % SR & 100\% & 100\% & 100\% & 100\% \\
   % time & 0.20s & 0.24s & 0.20s & 0.20 \\\hline
\end{tabular}
\caption{Computational results for randomly generated ball constrained VIPs}%\label{tab:n=4to15}
\label{tab:ballVI}
\end{table}

For this problem, we would like to remark on the following:
\begin{itemize}
\item When $(n,d)=(5,4)$, there is one case of VIP where Algorithm~\ref{ag:KKTSDP} can only find a solution whose error $\vert \varepsilon \vert =9.04\cdot 10^{-6}>10^{-6}$.
\item When $(n,d)=(6,5)$, there is one case of VIP where Algorithm~\ref{ag:KKTSDP} can only find a solution whose error $\vert \varepsilon \vert =5.78\cdot 10^{-6}>10^{-6}$.
\item When $(n,d)=(10,2)$, there is one case of VIP where we cannot find a minimizer for (\ref{eq:genopt}) due to the limit of memory.
\end{itemize}
\subsubsection{Randomly generated constrained eigenvalue problems}
Consider the constrained eigenvalue problem in Example~\ref{ep:cons_eig}.
For the given dimension $n$,
we randomly generate matrices $A\in\re^{n\times n}$ and $\hat{B}\in\re^{n\times n}$ using the {\tt Matlab} function {\tt randn}, and let $B:=\hat{B}^T\hat{B}$.
Then $x\in\re^n$ solves the constrained eigenvalue problem defined by $A$ and $B$ if it solves the $\VIXF$ given by (\ref{eq:cons_eig}).
We ran Algorithm~\ref{ag:KKTSDP} for the cone given in (\ref{eq:linearcone}) and the second order cone as in (\ref{eq:soc}),
and reported numerical results in Tables~\ref{tab:cons_eig}.
In the table, the column `$C$' is the cone $C$ we use for the constrained eigenvalue problem,
`$n$' is the dimension for $x$,
`SR' is the rate of solving the $\VIXF$ successfully, that either a solution is obtained or the nonexistence of solutions is detected,
and `time' has the same meaning as in Table \ref{tab:ballVI}.
\begin{table}[htb]
\begin{tabular}{cc||ccccccccccccc}
   \hline
   $C$ & $n$   & 3 & 4 & 5 & 6 & 7 & 8  \\ \hline
   \multirow{2}{*}{(\ref{eq:linearcone})} & SR &  100\% & 100\% & 98\% & 100\% & 100\% & 98\% \\
   & time &  0.60s & 0.81s & 0.82s & 1.46s & 8.03 & 20.85\\\hline
   \multirow{2}{*}{(\ref{eq:soc})} &   SR &  100\% & 100\% & 100\% & 99\% & 98\% \\
   & time &  0.98s & 1.31s & 2.21s & 7.04s & 45.24\\\hline
\end{tabular}
\caption{Computational results for randomly generated constrained eigenvalue problems}
\label{tab:cons_eig}
\end{table}

For the case that $C$ is given by (\ref{eq:linearcone}), we would like to remark on the following
\begin{itemize}
    \item When $n=5$, there is one case of VIPs where Algorithm~\ref{ag:KKTSDP} can only find a solution whose error $\vert \varepsilon \vert =9.50\cdot 10^{-6}>10^{-6}$.
    Moreover, there is another VIP where the Moment-SOS hierarchy cannot find a global minimizer for (\ref{eq:genopt}).
    \item When $n=7$, there are two cases of VIPs where we cannot find a minimizer for (\ref{eq:genopt}) due to the limit of memory.
    \item When $n=8$, there is one VIP where we cannot find a minimizer for (\ref{eq:genopt}) due to the limit of memory,
    and there is another VIP where we can only find a solution whose error $\vert \varepsilon \vert =5.04\cdot 10^{-6}>10^{-6}$.
\end{itemize}

For the case that $C$ is given by (\ref{eq:soc}), we would like to remark the following
\begin{itemize}
    \item When $n=6$, there is one case of VIPs where we can only find a solution whose error $\vert \varepsilon \vert =2.65\cdot 10^{-6}>10^{-6}$.
    \item When $n=7$, there are two cases of VIPs where we cannot find a minimizer for (\ref{eq:genopt}) due to the limit of memory.
\end{itemize}

\subsubsection{Randomly generated invariant capital stock problems}
Consider the invariant capital stock problems (\ref{eq:Pb0}) in Example~\ref{ep:inv_cap}.
For the given dimensions $n_1,n_2$,
we randomly generate $A\in\re^{n_2\times n_1}$, $C\in\re^{n_1\times n_1}$, $b\in\re^{n_2}$ using the {\tt Matlab} function {\tt randn}, and randomly generate $B\in\re_+^{n_2\times n_1}$ using {\tt rand},
and let $\rho=0.8$, $f(x):=[x]_1^TC^TC[x]_1$.
Then, the $\vi(\re_+^{n_1+n_2},\mbF)$ is given by (\ref{eq:vicapital}).
Note that although the feasible set of this VIP is convex, it is unbounded.
It is possible that $\mbox{SOL}(\re_+^{n_1+n_2},\mbF)=\emptyset$.
We reported the numerical results in Table~\ref{tab:inv_cap}.
In the table, $(n_1,n_2)$ is the pair of dimensions,
`SR' (success rate) and `time' have the same meaning with them in Table \ref{tab:ballVI}.
We remark that the $\VIXF$ computed in Example~\ref{ep:inv_cap} has higher degrees,
and we checked the uniqueness of the computed solution,
thus the time consumption is larger than that in Table~\ref{tab:inv_cap}.
\begin{table}[htb]
\begin{tabular}{c||ccccccccccccc}
   \hline
   % $(n,m)$  & (2,2)  & (3,2) & (4,2) & (5,2) & (6,2) & (7,2) \\ \hline
   % SR & 100\% (14) & 100\% (10) & 100\% (3) & 100\% (1) & 100\% (3) & 100\% (0) \\
   % time & 0.24s & 0.24s & 0.25s & 0.26s & 0.28s & 0.26s \\\hline
   $(n_1,n_2)$   & (4,2) & (5,2) & (6,2) & (7,2) & (8,2) & (9,2) \\ \hline
   SR &  100\%  & 100\%  & 100\%  & 100\%  & 100\%  & 99\%  \\
   time &  0.25s & 0.26s & 0.28s & 0.26s & 0.32s & 0.29s \\\hline
   $(n_1,n_2)$   & (4,3) & (5,3) & (6,3) & (7,3) & (8,3) & (9,3) \\ \hline
   SR &  100\%  & 100\%  & 100\%  & 100\%  & 100\%  & 100\%  \\
   time &  0.26s & 0.29s & 0.30s & 0.34s & 0.40s & 0.42s \\\hline
   $(n_1,n_2)$   & (4,4) & (5,4) & (6,4) & (7,4) & (8,4) & (9,4) \\ \hline
   SR &  100\%  & 100\%  & 100\%  & 100\%  & 99\%  & 100\%  \\
   time &  0.35s & 0.33s & 0.43s & 1.03s & 0.47s & 51.75s \\\hline
   % $(n_1,n_2)$   & (5,5) & (5,4) & (6,4) & (7,4) & (8,4) & (9,4) \\ \hline
   % SR &  100\% (12) & 100\% (11) & 100\% (5) & 100\% (2) & 100\% (4) & 100\% (3) \\
   % time &  0.49s & 0.33s & 0.43s & 1.03s & 14.49s & 51.75s \\\hline
\end{tabular}
\caption{Computational result for randomly generated invariant capital stock problems}
\label{tab:inv_cap}
\end{table}

For this problem, we would like to remark on the following:
\begin{itemize}
\item When $(n_1,n_2)=(9,2)$, there is a VIP where we can only find a solution whose error $\vert \varepsilon \vert =5.22\cdot 10^{-6}>10^{-6}$.
\item When $(n_1,n_2)=(8,4)$, there is a VIP where the Moment-SOS hierarchy cannot find a global minimizer for (\ref{eq:genopt}).
\item When $(n_1,n_2)=(9,4)$, there are two VIPs where Algorithm~\ref{ag:KKTSDP} took more than 2000 seconds to solve each one of them (2371.65s and 2740.15s respectively),
and the average time consumption for solving all other VIPs is 0.65s.
\end{itemize}

\begin{acknowledgement}
The research of Defeng Sun is supported in part by the NSFC/RGC Joint Research Scheme under grant  N$\_$PolyU504/19.
The research of Xindong Tang is partially supported by the Start-up Fund P0038976/BD7L from The Hong Kong Polytechnic University.
The research of Min Zhang is partially supported by the National Natural Science Foundation of China under grant 12101598.
\end{acknowledgement}

% \appendix
% \appendixpage
% \addappheadtotoc
% \appendix{
\begin{appendices}
\normalsize
\section{Generic properties of polynomial VIPs}
\label{sc:degree}
First, we review some basics of algebraic geometry.
Let $\cpx$ be the complex field and $\cpx[x]$ be the ring of polynomials
in the variable $x = ( x_{i,j} ).$
An {\it ideal} $I$ of $\cpx[x]$ is a subset of $\cpx[x]$ such that
$a+b \in I$ for all $a, b \in I$ and
$q\cdot p\in I$ for all $p\in I$ and $q\in \cpx[x]$.
Given the ideal $I$,
we say the polynomial tuple $(p_1\ddd p_m)$ is a generator for $I$ if for every $p\in I$,
we have
\[p=p_1\cdot q_1+\dots+p_m\cdot q_m \mbox{ for some }q_1\ddd q_m\in\cpx[x].\]
Every ideal is generated by finitely many polynomials.
For an ideal $I$, the set
$V(I) \, \coloneqq \,
\{x\in\cpx^n: p(x) = 0 ,\, \forall\, p \in I
\}$
is called the variety of $I$.
Such a set is called an {\it affine variety}.

Let $\td{x}:=(x_0,x_1\ddd x_n)$ be the tuple of variables in $\cpx^{n+1}$.
The $n$-dimensional complex projective space $\Pj^n$
is the set of all lines passing through the origin of $\cpx^{n+1}.$
A point $\td{x}$ in $\Pj^n$ has the coordinate
$[x_0 : x_1 : \cdots :x_n]$ such that at least one of $x_i$ is nonzero.
An ideal generated by homogeneous polynomials in $\cpx[x]$ is called a {\it homogeneous ideal}.
Given the homogeneous ideal $\td{I}$,
the set of all $\tdx\in\Pj^n$ such that $p(\tdx)=0$ for all $p\in \td{I}$ is called the {\it projective variety} of $\td{I}$.
Moreover, the {\it Zariski topology} over $\Pj^n$ (resp., $\cpx^n$) is the topology space such that all closed sets are given by projective (resp., affine) varieties, and open subsets of projective varieties are called {\it quasi-projective varieties}.
For the given quasi-projective varieties $\mc{X}$,
closed subsets are given by intersections of $\mc{X}$ with a projective variety.
It is clear that both projective and affine varieties are quasi-projective.
Besides that, we say a variety being {\it irreducible} if it is not the union of two proper closed subsets.
Any variety can be decomposite as the union of finitely many irreducible varieties, which are called {\it irreducible components}.

For the irreducible quasi-projective variety $\mc{X}$ in $\Pj^n$,
its {\it dimension} $\dim\mc{X}$ is the largest nonnegative integer $k$ such that the intersection of $\mc{X}$ and $k$ general hyperplanes is nonempty.
The dimension of a reducible variety is defined by the maximum dimensions of all its irreducible components.
The {\it codimension} $\cod(\mc{X})$ for the quasi-projective variety $\mc{X}$ in $\Pj^n$ equals $n-\dim(\mc{X})$.
It is clear that $\dim\Pj^n=n$,
and the dimension of all hypersurfaces in $\Pj^n$ equals $n-1$.
If $\mc{Y}$ is the hypersurface defined by a homogeneous polynomial $p(\tdx)$ that is not identically zero on any irreducible component of $\mc{X}$ and $\mc{X}\cap\mc{Y}\ne\emptyset$,
then \[\cod(\mc{X}\cap\mc{Y})=\cod\mc{X}+1.\]

\subsection{Proof of Theorem~\ref{tm:solfinite}}
Now, we show the finiteness of $\mc{K}$ defined in Subsection \ref{subs:KKT} and the algebraic degree of the $\VIXF$.
Recall that when some constraint qualification condition holds at $x\in\SOLXF$,
$x$ satisfies the KKT conditions (\ref{eq:viKKT}) for the $\VIXF$.
Consider the following polynomial system
\be\label{eq:cpxKKT}
\left\{
\begin{array}{c}
\mbF(x) = \sum_{i=1}^{m} \lambda_i \nabla g_i(x),\\
\lambda_1\cdot g_1(x)=\dots= \lambda_m\cdot g_m(x)=0,\ g_i(x)=0\, (i\in\mc{E}).
\end{array}
\right.
\ee
Then, the (\ref{eq:cpxKKT}) consists of all equalities in (\ref{eq:viKKT}).
We show the finiteness of complex solutions to (\ref{eq:cpxKKT}) when all $F_1\ddd F_n, g_1\ddd g_m$ are generic polynomials.
Let $\mc{A}$ be an active constraint labelling set.
Without loss of generality,
assume $\mc{A}=\{1\ddd \hat{m}\}$ with $\hat{m}\le m$.
From the generality of $g_1\ddd g_m$,
the $\hat{m}$ cannot be greater than $n$.
This is because the intersection of more than $n$ hypersurfaces in $\cpx^n$ given by general polynomials in $x$ is empty.
Moreover, if we let
% \begin{align*}
\[
J(x) \defeq \lvt\ F(x)\quad \nabla g_1(x)\quad \dots \quad \nabla g_{\hat{m}}(x)\ \rvt, \quad
\mc{V} \defeq \left\{ x\in\cpx^n: \rank{J(x)}\le \hat{m}
\right\},\]
% \end{align*}
then $\mc{V}$ is a complex variety defined by vanishing all $(\htm+1)\times(\htm+1)$ minors of $J(x)$.
% and we have $x\in\mc{V}$.
Denote \[\mc{U}\defeq\{x\in\cpx^n:g_1(x)=\dots=g_{\htm}(x)=0\}.\]
If $\mc{V}\cap\mc{U}=\emptyset$, then there does not exist $x$ satisfying (\ref{eq:cpxKKT}) with active labeling set $\mc{A}$.
\begin{proof}[Proof of Theorem~\ref{tm:solfinite}]
For any given active labeling set $\mc{A}$, when all $(g_i(x))_{i\in\mc{A}}$ are general polynomials,
the Jacobian matrix $\lvt\, \nabla g_i(x) \, \rvt_{i\in\mc{A}}$ is full column ranked over $\cpx^n$, by \cite[Proposition~2.1]{nie2009} (see also \cite[Theorem~3.2]{nie2012}).
Therefore, when $g_1\ddd g_m$ are all general, the LICQ holds at every point in $\cpx^n$,
and the $x\in\SOLXF$ only if there exist $\lmd_1\ddd \lmd_m$ such that (\ref{eq:viKKT}) holds.
Moreover, the finiteness of $\vert\SOLXF\vert $ is implied by the finiteness of solutions to (\ref{eq:cpxKKT}).

If the solution set for (\ref{eq:cpxKKT}) is finite for any given active labelling set $\mc{A}$, then (\ref{eq:cpxKKT}) has finitely many solutions by enumerating all possibilities of $\mc{A}$.
Without loss of generality, we assume $m<n$ and all constraints are active, i.e., $\mc{A}=[m]$, for convenience of our discussion.

For each $g_i(x)$, its homogenization is
\be\label{eq:homg}\td{g}_i(\tdx)\defeq x_0^{b_i}\cdot g\left(\frac{x_1}{x_0}\ddd \frac{x_n}{x_0}\right).\ee
Denote $\wtU\defeq \{\tdx\in\Pj^n: \td{g}_1(\tdx)=\dots=\td{g}_m(\tdx)=0\}.$
It is a projective variety which contains $\mc{U}$ up to the natural embedding from $\cpx^n$ to $\Pj^n$; see \cite{Shafarevich2013}.
When $g_1\ddd g_m$ are general polynomials, we have $\dim\wtU=n-m$
by Bertini's theorem \cite[Theorem~A.1]{nie2009}.
Moreover, since $g_i$ is general,
we have $\nabla_x \td{g}(\tdx)=x_0^{d_i-1}\nabla_x  g\left(\frac{x_1}{x_0}\ddd \frac{x_n}{x_0}\right)$ and the homogenization of $\nabla g_i$ equals the last $n$ components of $\nabla_x \td{g}(\tdx)$.
For each $i\in[N]$,
we let
\begin{align*}
    \td{F}_i(\tdx)&\defeq x_0^{a_1}\cdot F_i\left(\frac{x_1}{x_0}\ddd \frac{x_n}{x_0}\right),\quad \td{\mbF}(\tdx)\defeq (\td{F}_1(\tdx),\td{F}_2(\tdx)\ddd \td{F}_n(\tdx)),\\
    \wt{J}(\tdx)& \defeq \lvt\ \td{\mbF}(\tdx)\quad \nabla \td{g}_1(\tdx)\quad \dots \quad \nabla \td{g}_m(\tdx)\ \rvt.
\end{align*}
% For each $i,j$, the $(\wt{J}(x))_{i,j}$ is the homogenization of $(J(x))_{i,j}$.
Then $\wtV\defeq \left\{ \tdx\in\Pj^n: \rank\wt{J}(x)\le m \right\}$ is a projective variety that is given by homogeneous polynomials whose degrees are $c\defeq a_1+b_1+\dots+b_m-m$.
Indeed, $\mc{V}$ is a quasi-projective variety that is given by $\wtV\setminus\{x_0\ne0\}$
and $\dim\mc{V}=\dim\wtV$.
Let $\wtW\defeq \mc{V}\cap\wtU$.
Then, $\wtW$ is an open subset of $\wtU$, and the finiteness of the solution set to (\ref{eq:cpxKKT}) follows directly if $\dim\,\wtW=0$.

Note that under our assumption,
$\nabla \td{g}_1(\td{x}) \ddd \nabla \td{g}_m(\td{x})$ are linearly independent for all $\td{x}\in\widetilde{U}$,
i.e., $\wt{J}(\tdx)=m$.
For each tuple of indices $\sigma=(s_1\ddd s_m)\subset[n]$,
if we let
\[\wt{J}_{\sigma}\defeq\lvt\begin{array}{cccc}
\frac{\pt \td{g}_1}{\pt x_{s_1}} & \frac{\pt \td{g}_2}{\pt x_{s_1}} & \dots & \frac{\pt \td{g}_{m}}{\pt x_{s_1}}\\
\frac{\pt \td{g}_1}{\pt x_{s_2}} & \frac{\pt \td{g}_2}{\pt x_{s_2}} & \dots & \frac{\pt \td{g}_{m}}{\pt x_{s_2}}\\
\vdots & \vdots & \dots & \vdots \\
\frac{\pt \td{g}_1}{\pt x_{s_{m}}} & \frac{\pt \td{g}_2}{\pt x_{s_{m}}} & \dots & \frac{\pt \td{g}_{m}}{\pt x_{s_{m}}}\\
\end{array}\rvt,\quad \mc{X}_{\sigma}=\{\tdx\in \wtU: \det \wt{J}_{\sigma}\ne0\},\]
then all $\mc{X}_{\sigma}$ are quasi-projective varieties and
\be\label{eq:s-decom}\wtU=\bigcup_{\sigma} \mc{X}_{\sigma}, \quad \wtW= \bigcup_{\sigma} \mc{X}_{\sigma}\cap\mc{V}.\ee

For the given $\sigma$ and constraining polynomials $g_1\ddd g_m$,
we consider the intersection $\wtW_{\sigma}\defeq \mc{X}_{\sigma}\cap\mc{V}$.
Since $\rank\, \wt{J}_{\sigma}=m$ on $\mc{X}_{\sigma}$,
the $\wtW_{\sigma}$ is given by vanishing all $(m+1)\times(m+1)$ minors of $\widetilde{J}(\tdx)$ containing rows indexed by $\sigma$,
over the quasi-projective variety $\mc{X}_{\sigma}\setminus \{x_0=0\}$.
There are $n-m$ such minors in total,
and each one of them has exactly one row that is not indexed by entries in $\sigma$.
Denote this row by $s_0$, and the according minor by $\mathfrak{m}_{s_0}$.
Then $\mathfrak{m}_{s_0}$ is a homogeneous polynomial parameterized by the coefficients of $F_{s_0}$.
Let $\Lambda$ be a subset of $[n]\setminus\sigma$ (possibly empty).
For all $s_0\notin \sigma\cup\Lambda$, the $\mathfrak{m}_{s_0}$ being identically zero on some irreducible components of
\[\mc{L}\defeq (\bigcap_{s\in\Lambda}\{\mathfrak{m}_s=0\})\cap(\mc{X}_{\sigma}\setminus \{x_0=0\})\]
gives a proper closed subset in the space of coefficients for $F_{s_0}$,
which can be regarded as a projective space.
Therefore, by \cite[Theorem~1.22]{Shafarevich2013}, if $F_{s_0}$ is a general polynomial in $\cpx[x]_{a_{s_0}}$, then
\[\dim\,\mc{L}\cap \{\mathfrak{m}_{s_0}=0\}=\dim\,\mc{L}-1.\]
Note that $\dim\mc{X}_{\sigma}=n-m$ since $\mc{X}_{\sigma}$ is open in $\mc{U}$.
We conclude $\dim\wtW_{\sigma}=0$ when $F_1\ddd F_n$ are general polynomials by induction,
and the finiteness of $\wtW$ follows from (\ref{eq:s-decom}).
\qed\end{proof}

\subsection{The algebraic degree of polynomial VIPs}
When $\vert \mc{K} \vert $ is finite,
Algorithms~\ref{ag:KKTSDP} and \ref{ag:more} terminate within finitely many steps.
Moreover, the quantity $\vert \mc{K} \vert +1$ gives an upper bound for the number of iterations needed for these algorithms to terminate.
In general, it is difficult to characterize the number of real solutions to the KKT system.
As we mentioned before, the $\mc{K}$ is a subset of the intersection of quasi-projective varieties $\mc{U}\cap\mc{V}$,
which is zero-dimensional when all defining polynomials for $\VIXF$ are general.
Therefore, we may estimate the quantity $\vert \mc{K} \vert $ by $\vert\mc{U}\cap\mc{V}\vert$, if the latter one is also finite.

Let $\mc{X}$ be a $k$-dimensional irreducible projective variety.
The {\it algebraic degree} of $\mc{X}$, denoted by $\deg \mc{X}$, is the number of points in $\mc{X}\cap\mc{Z}$,
where the $\mc{Z}$ is a general $(n-k)$-dimensional hyperplane.
In particular, if $\dim \mc{X}=0$, the $\deg\mc{X}$ counts the number of points in $\mc{X}$ (counting multiplicities).
When $\mc{X}$ is reducible and $\dim\mc{X}=k$, the $\deg\mc{X}$ is the sum of degrees of all $k$-dimensional irreducible components for $\mc{X}$.

For two projective varieties $\mc{X}$ and $\mc{Y}$,
we say they intersect properly if $\dim \mc{X}\cap\mc{Y}=\dim\mc{X}+\dim\mc{Y}-n$.
The following result, called B\'{e}zout's Theorem \cite{Harris1992}, is useful when considering the algebraic degree for intersections.
We refer to the textbook \cite{Harris1992} for the definition of smooth varieties and transversal intersections.
\begin{thm}
\label{tm:bezout}
Let $\mc{X}$ and $\mc{Y}$ be projective varieties in $\Pj^n$ with dimensions $k$ and $l$ respectively.
If $k+l\ge n$ and $\mc{X}$ intersect $\mc{Y}$ properly,
then
$\deg \mc{X}\cap\mc{Y} \le \deg \mc{X}\cdot \deg \mc{Y}.$
In particular, the equality holds when $\mc{X}$ and $\mc{Y}$ intersect transversely.
\end{thm}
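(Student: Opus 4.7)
The plan is to establish B\'ezout's theorem via intersection theory on $\Pj^n$, using the well-known Chow ring computation plus the positivity of intersection multiplicities. I would not try to prove it directly by elimination theory, since the inequality and transversality statements are both cleanest in the Chow-ring language.

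First I would recall the Chow ring of projective space: $A^*(\Pj^n)\cong \mathbb{Z}[H]/(H^{n+1})$, where $H$ is the class of a hyperplane. For an irreducible subvariety $\mc{X}\subseteq \Pj^n$ of dimension $k$, one shows $[\mc{X}]=(\deg \mc{X})\cdot H^{n-k}$ by intersecting with a general linear subspace of complementary dimension, which by definition is what computes $\deg \mc{X}$; the reducible case follows by linearity on top-dimensional components. This sets up the bookkeeping so that degree information is entirely encoded in codimension.

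Next I would exploit proper intersection. Under the hypothesis $\dim(\mc{X}\cap \mc{Y})=k+l-n$, each irreducible component $Z_i$ of $\mc{X}\cap \mc{Y}$ has the expected dimension, and the cycle-level intersection product is
\[
[\mc{X}]\cdot [\mc{Y}] \;=\; \sum_i i(Z_i;\mc{X},\mc{Y})\,[Z_i]
\]
in $A^{2n-k-l}(\Pj^n)$, where $i(Z_i;\mc{X},\mc{Y})\ge 1$ is the local intersection multiplicity along $Z_i$ (defined, say, by Serre's Tor formula). Substituting Step~1 on both sides and equating coefficients of the nonzero class $H^{2n-k-l}$ yields
\[
\deg\mc{X}\cdot \deg\mc{Y}\;=\;\sum_i i(Z_i;\mc{X},\mc{Y})\,\deg Z_i \;\ge\; \sum_i \deg Z_i\;=\;\deg(\mc{X}\cap \mc{Y}),
\]
which is the desired inequality. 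For the equality statement, if the intersection is transverse at every point of $\mc{X}\cap \mc{Y}$, then $\mc{X}$ and $\mc{Y}$ are locally cut out by regular sequences whose union remains a regular sequence at each $Z_i$; by the standard Koszul/Tor calculation this forces $i(Z_i;\mc{X},\mc{Y})=1$, giving equality.

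The main obstacle is making Step~2 rigorous: one must justify both the definition of intersection multiplicity and the fact that cycle classes of subvarieties multiply in $A^*(\Pj^n)$ as their set-theoretic intersections (with multiplicities) when the intersection is proper. The cleanest way is to invoke Fulton's construction (or Serre's Tor-based definition) together with the moving lemma on $\Pj^n$, which guarantees that rational equivalence lets us reduce to transverse intersections with hyperplane powers. A more elementary route, which I would pursue if I wanted a self-contained proof, is induction on $\min(n-k,n-l)$: reduce $\mc{Y}$ to a complete intersection of $n-l$ hypersurfaces by a general linear projection argument, at each stage applying the classical fact that cutting a $d$-dimensional projective variety by a hypersurface of degree $e$ not containing any component yields an intersection of degree at most (and equal to, if transverse) $d\cdot e$. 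The base case is Krull's Hauptidealsatz on $\Pj^n$ combined with the hyperplane class computation.
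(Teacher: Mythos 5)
The paper does not prove this statement: it is quoted verbatim as B\'ezout's Theorem with a citation to Harris's textbook, so there is no in-paper argument to compare yours against. Your sketch is the standard intersection-theoretic proof --- compute $A^*(\Pj^n)\cong\mathbb{Z}[H]/(H^{n+1})$, identify $[\mc{X}]=(\deg\mc{X})H^{n-k}$, use positivity of the local intersection multiplicities along the (expected-dimensional) components of a proper intersection to get the inequality, and the Koszul/Tor computation to get multiplicity one in the transverse case --- and this is essentially the route taken in the cited reference, so it is the right proof for the right statement. Two small points worth tightening: first, for the equality clause you only need \emph{generic} transversality (transversality at a general point of each component of $\mc{X}\cap\mc{Y}$), which is how Harris states it; requiring transversality at every point is stronger than necessary and implicitly forces smoothness of both varieties along the whole intersection. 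Second, as you note yourself, the positivity $i(Z_i;\mc{X},\mc{Y})\ge 1$ and the identity $[\mc{X}]\cdot[\mc{Y}]=\sum_i i(Z_i;\mc{X},\mc{Y})[Z_i]$ for proper intersections are genuinely nontrivial inputs (Serre's Tor formula with his positivity theorem, or Fulton's construction); in a self-contained write-up you would either cite these or carry out your proposed induction reducing $\mc{Y}$ to successive hypersurface sections, which is the more elementary path and suffices for the way the theorem is actually used in this paper (bounding $\deg(\wtV_{\epsilon}\cap\wtU)$ by a product of degrees).
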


For the positive integer $r$,
denote by $S_r$ the $r$th complete symmetric function on $k$ letters $(n_1, n_2, . . . , n_k)$,
that
\[S_r\defeq \sum_{i_1+i_2+\dots+i_k=r} n_1^{i_1}n_2^{i_2}\dots n_k^{i_k}.\]
\begin{thm}  \label{tm:deg}
For the $\VIXF$, let $a_1\ddd a_n\in\N$ and $b_1\ddd b_m\in\N_+$ be degrees for $F_1\ddd F_n$ and $g_1\ddd g_m$ respectively.
If (\ref{eq:cpxKKT}) is zero-dimensional,
% i.e., there are finitely many complex KKT points,
and all constraints are active,
then the number of complex KKT points is bounded by
\be\label{eq:deg} b_1b_2\dots b_m\cdot S_{n-m}\left(\max_i\{a_i\}, b_1\ddd b_m\right).\ee
\end{thm}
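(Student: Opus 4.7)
The plan is to bound $|\mc{K}|$ by the degree of the zero-dimensional projective intersection $\wtU \cap \wtV$, where $\wtU$ is the projective closure of $\mc{U}=\{g_1=\cdots=g_m=0\}$ and $\wtV$ is the projective determinantal variety defined by $\rank\, \wt{J}(\tdx)\le m$ (as introduced in the proof of Theorem~\ref{tm:solfinite}). Since all constraints are active and the KKT system is zero-dimensional, every complex KKT point appears in this intersection, so it suffices to bound $\deg(\wtU \cap \wtV)$.

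First, I would handle $\wtU$. The variety $\wtU$ is cut out by $m$ homogeneous hypersurfaces $\td{g}_1,\ldots,\td{g}_m$ of degrees $b_1,\ldots,b_m$, and under the generic/proper-intersection setup it has dimension $n-m$. By iterated Bezout (Theorem~\ref{tm:bezout}), one obtains
\[
\deg \wtU \,\le\, b_1 b_2 \cdots b_m.
\]
Since $\wtU\cap\wtV$ is zero-dimensional by hypothesis, the intersection of $\wtU$ with $\wtV$ is proper, and Theorem~\ref{tm:bezout} (applied after a small dimension bookkeeping with pure-dimensional components) gives $\deg(\wtU\cap\wtV) \le \deg\wtU \cdot \deg\wtV$.

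The main step — and the main obstacle — is computing $\deg\wtV$. Here $\wtV\subseteq \Pj^n$ is the locus where the $n\times(m+1)$ matrix $\wt{J}(\tdx) = [\,\td{\mbF}(\tdx)\ \ \nabla \td g_1(\tdx)\ \cdots\ \nabla \td g_m(\tdx)\,]$ drops rank from $m+1$ to $m$, hence has expected codimension $n-m$. To get a clean degree, I would regard $\wt J$ as a map of split vector bundles on $\Pj^n$: after uniformly bounding the degrees in the $\td{\mbF}$ column by $\max_i a_i$, the column of $\td{\mbF}$ is a section of $\mc{O}(\max_i a_i)^n$ and the column $\nabla\td g_j$ corresponds to degree $b_j$ (using the Euler identity $b_j\td g_j=\sum_k x_k\partial\td g_j/\partial x_k$ to replace $b_j-1$ by $b_j$). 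Applying the Thom-Porteous/Giambelli formula for the degeneracy locus of corank one in a matrix with columns of these assigned degrees yields the Schur-type class whose degree is exactly the complete homogeneous symmetric polynomial
\[
S_{n-m}\bigl(\max_i\{a_i\},\, b_1,\ldots,b_m\bigr).
\]
Combining with the bound on $\deg\wtU$ produces the desired estimate $b_1\cdots b_m \cdot S_{n-m}(\max_i a_i, b_1,\ldots,b_m)$.

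I anticipate the hardest part is justifying the Porteous-type computation rigorously for a matrix whose columns have mixed degrees, including the step of uniformly raising the $a_i$'s to $\max_i a_i$ without losing generic transversality (so that Theorem~\ref{tm:bezout} yields an inequality rather than an equality). A convenient way around this is to argue via specialization: for the degrees fixed, regard the coefficients of $F$ and $g$ as generic parameters, verify transversality of $\wtU$ and $\wtV$ at the generic fiber (using the argument in the proof of Theorem~\ref{tm:solfinite} that $\wtW=\mc{V}\cap\wtU$ is zero-dimensional generically), and use upper semicontinuity of intersection numbers under specialization to transfer the bound to the given instance.
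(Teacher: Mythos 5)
Your proposal follows essentially the same route as the paper's proof: both bound the number of complex KKT points by $\deg(\wtU\cap\wtV)$, apply B\'ezout with $\deg\wtU\le b_1\cdots b_m$, obtain $\deg\wtV\le S_{n-m}(\max_i a_i, b_1\ddd b_m)$ from the Thom--Porteous-type formula for the corank-one degeneracy locus (the paper cites \cite[Proposition~A.6]{nie2009} for exactly this), and handle the lack of transversality in the given instance by a generic perturbation/specialization argument (the paper realizes your ``upper semicontinuity'' step concretely via a one-parameter family $\wtV_{\epsilon}$ and an elimination-theoretic count of the curves through the special fiber). The approach and all key ingredients match, so the proposal is correct.
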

\begin{proof}
Without loss of generality, assume $a_1=\max_i\{a_i\}$.
Given an $\epsilon>0$,
let $\Delta F_{i}\in(\cpx[x]_{a_1})$ be a tuple of general perturbations to $F_i(x)$ for each $i\in[n]$,
$\Delta g_{i}\in(\cpx[x]_{b_i})$ be a vector of general perturbations to $\nabla g_i(x)$ for each $i\in[m]$.
For the parameter $\epsilon$,
denote $(F_i)_{\epsilon}\defeq F_i+\epsilon\Delta F_{i}$ ,
$(\nabla g_i)_{\epsilon}\defeq \nabla g_i+\epsilon\Delta g_{i}$ for every $i\in[m]$,
and $\mbF_{\epsilon}\defeq((F_1)_{\epsilon}\ddd (F_n)_{\epsilon})$.
Let $(\td{F}_i)_{\epsilon}$ and $(\nabla \td{g}_i)_{\epsilon}$ be homogenizations of $(F_i)_{\epsilon}$ and $(\nabla g_i)_{\epsilon}$, and let
\begin{align*}
J_{\epsilon}(\tilde{x}) & \,\coloneqq\,
\bbm
\mbF_{\epsilon}(\tdx) &\ (\nabla\td{g}_1)_{\epsilon}(\tdx) &\ \dots &\ (\nabla\td{g}_m)_{\epsilon}(\tdx)
\ebm,\\
\widetilde{\mc{V}}_{\epsilon} & \,\coloneqq\, \left\{(\tdx,\epsilon)\in\Pj^n\times \cpx :
\rank\,\wt{J}_{\epsilon}(\tilde{x})\le m\right\}.
\end{align*}
The $\widetilde{\mc{V}}_{\epsilon}$ is a quasi-projective variety with a projection $\pi:\widetilde{\mc{V}}_{\epsilon}\to \cpx$ such that for all $\hat{\epsilon}\in\cpx$,
the fiber $\pi^{-1}(\hat{\epsilon})\subseteq\Pj^n$ is a projective variety.
Since the perturbations are general, there exists at most finitely many $\hat{\epsilon}$ such that $\pi^{-1}(\hat{\epsilon})\cap\mc{K}$ being $1$-dimensional,
and all other fibers intersection $\wtU$ transversely at finitely many points.
Thus $\dim (\widetilde{\mc{V}}_{\epsilon}\cap\wtU)=1$.
Furthermore, every irreducible component of $\widetilde{\mc{V}}_{\epsilon}\cap\wtU$ has a dimension equal to $1$, by \cite[Proposition~17.24]{Harris1992}.
Note that for every $u\in \mc{V}\cap\wtU=\pi^{-1}(0)\cap\wtU$,
it is contained in an irreducible component, thus there exists a curve $u(t)\subseteq \widetilde{\mc{V}}_{\epsilon}\cap\wtU$ such that the graph of $u(t)$ maps onto $\cpx$ and $u(0)=u$.
For the general $t\in\cpx$,
by genericity of every $\Delta F_i$ and $\Delta g_i$ and the elimination theory (see \cite{cox2013book}), we have $u(t)=(u_1(t)\ddd u_n(t))$ satisfies
\[
a_{\xi}(t)u_{i}(t)^{\xi}+
a_{\xi-1}(t)u_{i}(t)^{\xi-1}+\dots+
a_{1}(t)u_{i}(t)^{1}+a_0(t)=0,
\]
for every $i\in[n]$, where
$\xi=\deg (\wtV_{\epsilon}\cap \wtU)$
and all $a_{\xi}(t)$ are rational functions of the coefficients of
$F_i$, $g_i$, $\Delta F_i$ and $\Delta g_i$.
This is a univariate polynomial equation which has at most $\xi$ many solutions.
So there exist at most $\xi$ many curves $u(t)$ described in above.
Therefore, the intersection
$\wtV\cap \wtU$
has at most $\xi$ points, and $\deg (\wtV_{\epsilon}\cap \wtU_{\epsilon})$
gives an upper bound for the number of complex KKT points.

For the intersection $\deg (\wtV_{\epsilon}\cap \wtU)$,
we have
\[\deg (\wtV_{\epsilon}\cap \wtU)\le \deg \wtV_{\epsilon}\cdot \deg \wtU.\]
The $\wtU$ is given by vanishing $m$ general homogeneous polynomials $g_1\ddd g_m$,
and the degree of each $g_i$ is $b_i$.
Therefore, the algebraic degree for the complete intersection $\wtU$ is at most $b_1b_2\dots b_m$.
In $J_{\epsilon}(\tilde{x})$,
the degree for the 1st column is $a_1$, and the degree for the $i$th column equals $b_{i-1}-1$ when $i\ge2$.
By \cite[Proposition~A.6]{nie2009},
\[\dim \wtV_{\epsilon}=n-m,\quad \deg \wtV_{\epsilon}\le S_{n-m}\left(a_1, b_1\ddd b_m\right).\]
So by Theorem~\ref{tm:bezout}, we have
\[\deg (\wtV_{\epsilon}\cap \wtU)\le d_1d_2\dots d_nS_{n-m}\left(a_1, b_1\ddd b_m\right).\]
And we conclude the upper bound for $\vert \mc{K} \vert $ by noticing that $a_1=\max_i\{a_i\}$ from our assumption.
\qed\end{proof}
\begin{rmk}
For the $\VIXF$, when there exist inequality constraints,
some of them may not be active,
and we can get an upper bound for $\mc{K}$ by enumerating all possible active labeling sets.
\end{rmk}
\begin{rmk}
When $a_1=a_2=\dots=a_n$ and every $F_i(x)$ is a general polynomial in $x$ with degree equal to $a_i$,
and each $g_j(x)$ is a general polynomial in $x$ with degree equal to $a_j$,
then the upper bound (\ref{eq:deg}) for the algebraic degree is tight.
However, the upper bound (\ref{eq:deg}) is large and usually much greater than $|\mc{K}|$,
since it only provides an upper bound for all complex KKT points.
Moreover, when there exists some $j\in[n]$ such that $a_j< \max_i\{a_i\}$,
then the number of complex KKT points is smaller than the upper bound given by (\ref{eq:deg}).
\end{rmk}
\end{appendices}

\vskip 15 true pt 
\noindent {\bf {\Large Declaration}}

\vskip 10 true pt

\noindent {\bf Conflict of interest} The authors declare that they have no conflict of interest.

\end{document}